\renewcommand{\Bbb}{\mathbb}
\definecolor{c1}{RGB}{10,100,155}
\definecolor{c2}{RGB}{50,135,90}
\definecolor{c3}{RGB}{178,34,34}
\theoremstyle{plain}
\newtheorem{theorem}{Theorem}[section]
\newtheorem{lemma}[theorem]{Lemma}
\theoremstyle{definition}
\newtheorem{definition}[theorem]{Definition}
\newtheorem{example}[theorem]{Example}
\newtheoremstyle{noparens}
{}{}
{\itshape}{}
{\bfseries}{{\bf .}}
{ }
{\thmname{#1}\thmnumber{ #2}\mdseries\thmnote{ #3}}
\theoremstyle{noparens}
\newtheorem{theopar}[theorem]{Theorem}
\newtheorem{lempar}[theorem]{Lemma}
\newtheoremstyle{noparenstwo}
{}{}
{}{}
{\bfseries}{{\bf .}}
{ }
{\thmname{#1}\thmnumber{ #2}\mdseries\thmnote{ #3}}
\theoremstyle{noparenstwo}
\newtheorem{defpar}[theorem]{Definition}
\newtheorem{rempar}[theorem]{Remark}
\DeclareMathOperator{\cone}{\sigma}
\DeclareMathOperator{\vol}{vol} 
\DeclareMathOperator{\conv}{conv} 
\DeclareMathOperator{\modulo}{mod} 
\def\dedl#1\dedr{\left(\negmedspace \left(#1\right) \negmedspace \right)}
\DeclareMathOperator{\saw}{s}
\DeclareMathOperator{\str}{str}
\DeclareMathOperator{\nvol}{v}
\DeclareMathOperator{\sail}{sail_{\sigma}}
\DeclareMathOperator{\sailclosed}{\overline{sail}_{\sigma}}
\DeclareMathOperator{\sailhat}{sail_{\flexhat{\sigma}}}
\DeclareMathOperator{\sails}{sails}
\DeclareMathOperator{\fineFan}{\Delta_{\sails}}
\DeclareMathOperator{\LHS}{LHS} 
\DeclareMathOperator{\RHS}{RHS} 
\newcommand{\raygen}{u}
\newcommand{\raygenhat}{\flexhat{u}}
\newcommand{\mstar}{m_\sigma}
\newcommand{\ksig}{k_{\sigma}}
\newcommand{\mstark}[1]{m_{\sigma,#1}}
\newcommand{\mstarhat}{m_{\flexhat{\sigma}}}
\newcommand{\ksighat}{k_{\flexhat{\sigma}}}
\newcommand{\mstarhatk}[1]{m_{\flexhat{\sigma},#1}}
\newcommand{\atau}{a_\tau}
\newcommand{\nab}{\nabla_{\sigma}}
\newcommand{\nabint}{\nabla_{\sigma}^{\circ}}
\newcommand{\nabhat}{\nabla_{\flexhat{\sigma}}}
\newcommand{\convm}[1]{\conv[\mstar, #1]}
\newcommand{\convmhat}[1]{\conv[\mstarhat, #1]}
\newcommand{\N}{\mathbb N}
\newcommand{\Z}{\mathbb Z}
\newcommand{\Q}{\mathbb Q}
\newcommand{\R}{\mathbb R}
\newcommand{\C}{\mathbb C}
\newcommand{\T}{\mathbb T}
  \newcommand\flausr{\@fleqntrue}
\newcommand\flexhat[1]{\widehat{#1}}
\begin{document}

\title[LDP polygons and the number 12 revisited]
{LDP polygons and the number 12 revisited}

 \author[U. Bücking]{Ulrike Bücking}
\address{Mathematisches Institut, Freie Universit\"at Berlin, 
Arnimallee 3, 14195 Berlin, Germany}
\email{buecking@math.fu-berlin.de}

 \author[C. Haase]{Christian Haase}
\email{haase@math.fu-berlin.de}

\author[K. Schaller]{Karin Schaller}
\email{kschaller@math.fu-berlin.de}

\author[J.-H. de Wiljes]{Jan-Hendrik de Wiljes}
\email{jan.dewiljes@math.fu-berlin.de}

\begin{abstract}
We give a combinatorial proof of a lattice point identity involving a lattice polygon and its dual, generalizing the formula $\operatorname{area}\left(\Delta \right) + \operatorname{area}\left(\Delta^* \right) = 6$ for reflexive $\Delta$.
The identity is equivalent to the stringy Libgober--Wood identity for toric log del Pezzo surfaces.
\end{abstract}

\maketitle
\setcounter{tocdepth}{1}

\thispagestyle{empty}

\section{Introduction}
\label{intro}

The goal of this article is to give a combinatorial proof of the following combinatorial identity: 
Consider the convex hull $\Delta \subseteq \R^2$ of $n_1, \ldots, n_k\in \Z^2$. 
Assume that each $n_i$ has coprime coordinates and that the origin is an interior point of $\Delta$. This data defines a piecewise linear function 
$\kappa_\Delta \colon
\R^2 \to \R$ via 
\begin{align*}
\kappa_\Delta(x) = - \min\left\{ \lambda \in \R_{\geq 0} \, \vert \, x \in
\lambda \Delta \right\}\,.
\end{align*}
The dual polygon is denoted by $\Delta^*$. Then
\begin{equation*} \label{originidentity}
6 \sum_{n \in \Delta \cap \Z^2} \left( \kappa_{\Delta}(n) + 1 
\right)^2 = \operatorname{area}\left(\Delta \right) +
\operatorname{area}\left(\Delta^* \right) \, .
\end{equation*}

This identity has been proven by Batyrev and the third
author~\cite[Corollary~4.5]{BS1} using a string-theoretic (allowing mild
singularities) variant of the Libgober--Wood identity for compact
complex manifolds~\cite[Proposition~2.3]{LW90}.
Conversely, we can obtain the stringy Libgober--Wood identity
for toric surfaces as a corollary of our combinatorial proof.
In our proof, we reduce this (global) statement to a local and cone-wise
statement, whose algebraic geometry
analogue could be of independent interest.

The formula as well as its variants in higher dimensions have a rich and colorful history. In the reflexive case where
both $\Delta$ and $\Delta^*$ have only integral vertices the formula
reduces to $\operatorname{area}\left(\Delta \right) + \operatorname{area}\left(\Delta^* \right) = 6$. Rodriguez-Villegas \& Poonen~\cite{PRV00} as well as Hille \& Skarke~\cite{HS02} prove non-convex and group theoretic generalizations of that latter formula, Kasprzyk \& Nill~\cite{KN12} relax the reflexivity hypothesis.
Still in dimension two, Haase \& Schicho~\cite{HS09} and also  Kołodziejczyk \& Olszewska~\cite{KO07} prove refinded inequalities, taking additional invariants into account.
In the open problem collection~\cite{CW05}, an equation for $3$-dimensional reflexive polytopes is stated, a combinatorial proof was sought.
Godinho, von Heymann \& Sabatini~\cite{GHS17} and Hofmann~\cite{Hofmann18} provide combinatorial proofs and higher-dimensional analogues, also for non-convex generalizations, but under a smoothness assumption for the underlying toric varieties. 
Finally, Batyrev and the third author \cite{BS1} remove this smoothness assumption, but need convexity.

\section{Notation and preliminaries}
\label{prelim}

This  chapter fixes our notation while recalling basic notions and provides a short summary of fundamental concepts used throughout the paper. Almost all of its content is well-known and the main references are \cite{CLS11, Ful93}.

\subsection{Polygons and Cones}

Let $N_{\R} \colonequals N \otimes_{\Z} \R \cong \R^2$ be a real vector space obtained by an extension of a $2$-dimensional lattice  $N \cong \Z^2$ (\emph{i.e.}, a free abelian group of rank $2$).
Furthermore, $M \colonequals \text{Hom}(N, \Z)$ denotes the dual lattice to $N$ 
and $\langle \cdot,\cdot \rangle \colon M \times N \to \Z$ the natural pairing
which extends to a pairing 
$\langle \cdot,\cdot \rangle \colon M_{\R} \times N_{\R} \to \R$, where 
$M_{\R}\colonequals M \otimes_{\Z} \R \cong \R^2$ is the corresponding 
real vector space to $M$.

Let $\Delta \subseteq N_{\R}$ be a $2$-dimensional 
 polytope (\emph{i.e.}, the convex hull $\conv(S)$
of a finite set $S \subseteq N_{\R}$),
 which will be called {\em polygon} in the following. \label{face} 
A {\em face} $\theta \preceq \Delta$ \emph{of}  $\Delta$ is an intersection of $\Delta$ with an affine hyperspace, \emph{i.e.}, there exists $m \in M_{\R}\setminus \left\{ 0\right\}$ and $ b \in \R$ such that 
$\theta = \Delta \cap H_{mb}$
with $H_{mb} \colonequals \left\{x \in N_{\R} \, \middle\vert \, \left<m,x \right> = b \right\} $. In particular, a {\em vertex} is a $0$-dimensional face and an {\em edge} a $1$-dimensional face.

If $\Delta \subseteq N_{\R}$ contains the origin $0 \in N$ in its interior, then one defines its {\em dual polygon} \label{dual} 
$\Delta^*$ as
\begin{align*}
\Delta^* \colonequals \left\{y \in M_{\R}\,  \middle\vert\,  \left<y,x \right> \geq -1  \; \forall x \in \Delta \right\} \subseteq M_{\R}\,.
\end{align*}
If $\theta \preceq \Delta$ is a face of $\Delta$, then
\begin{align*}
\theta^{*} \colonequals \{y \in \Delta^{*} \vert \left<y,x\right> = -1 \; \forall x \in \theta \} \preceq \Delta^{*} 
\end{align*}
is a face of $ \Delta^{*}$, the so-called {\em dual face of} $\theta$. 
Moreover, the duality between $\Delta$ and $\Delta^*$ implies a one-to-one order-reversing duality between $k$-dimensional faces \label{remdualface} $\theta \preceq \Delta$  of $\Delta$ and $(2-k-1)$-dimensional dual faces $\theta^* \preceq \Delta^*$  of $\Delta^*$ such that $\dim(\theta)+\dim(\theta^*)=1$.

A polygon $\Delta \subseteq N_{\R}$ is called {\em lattice polygon} if 
\begin{align*}\Delta =\conv(\Delta \cap N)\,,
\end{align*}
\emph{i.e.}, if all vertices of $\Delta$ belong to the
lattice $N$.

If a lattice polygon $\Delta \subseteq N_{\R}$  contains the origin $0 \in N$ in its interior and its dual polygon $\Delta^*$ is also a lattice polygon, then it is called {\em reflexive}.

\begin{definition} \label{defldp}
A lattice polygon $\Delta \subseteq N_{\R}$ containing
the origin $0 \in N$ in its interior such that all vertices of $\Delta$
are primitive lattice points in $N$ is called {\em LDP polygon},
where LDP is an abbreviation for \lq log del Pezzo\rq.
\end{definition}

In general, the vertices of the dual 
polygon $\Delta^*\subseteq M_\R$ to an LDP polygon $\Delta$ are not lattice points in $M$, \emph{i.e.},
$\Delta^*$ is in general a rational polygon.
If $\Delta\subseteq N_{\R}$ is a reflexive polygon, then the origin $0 \in N$ is the only interior lattice point of $\Delta$. Hence, all vertices of $\Delta$
are primitive lattice points in $N$, \emph{i.e.}, 
LDP polygons build a superclass of reflexive polygons.

Let $\Delta \subseteq N_{\R}$ be a lattice polygon.
Then we define $\nvol(\Delta)$ to be the {\em normalized volume of} $\Delta$,  
\emph{i.e.}, the positive integer 
\begin{align*}\nvol(\Delta) \colonequals 2!\cdot \vol_2 (\Delta)\,,
\end{align*}
where $\vol_2(\Delta)$ denotes the $2$-dimensional volume of $\Delta$ with respect to the lattice $N$. Note that $\vol_2(\Delta)=\operatorname{area}(\Delta)$ if $N= \Z^2$.
Similarly, we define the positive integer $\nvol(\theta) \colonequals k! \cdot \vol_k(\theta)$  for a $k$-dimensional face
$\theta \preceq \Delta$ of $\Delta$, where $\vol_k(\theta)$ denotes 
the $k$-dimensional volume of $\theta$ with respect to the sublattice $\langle \theta \rangle_{\R} \cap N$. 

If $\Delta$ has vertices in $N_{\Q} \colonequals N \otimes_{\Z} \Q$, 
\emph{i.e.}, $\Delta$ is a {\em rational
polygon}, then we can similarly define the positive rational number 
$\nvol(\theta)$ for any face
$\theta \preceq \Delta$. For this purpose, we consider an integer $l$ such that $l\Delta$ is
a lattice polygon and define for a $k$-dimensional face $\theta \preceq \Delta$ its normalized volume as
$\nvol(\theta) \colonequals \frac{1}{l^k} \nvol(l \theta)$.

Let $U \subseteq N_{\R}$ be a finite set. Then a (\emph{convex polyhedral}) \emph{cone} generated by $U$ is defined as the set
\begin{align*}
\cone\colonequals \text{cone}(U) = \{ {\textstyle \sum_{u \in U}} \lambda_u u \, \vert \,  \lambda_u \geq 0\}\,.
\end{align*}
If $U$ consists of $\R$-linear independent lattice vectors,
then the corresponding (half-open) {\em fundamental parallelogram} of  $U$ is
\begin{align*}
\Pi\colonequals \Pi(U)=  \{ {\textstyle \sum_{u \in U}} \lambda_u u  \, \vert \, 0\leq \lambda_u <1\}\,.
\end{align*}
Note that the normalized volume of the fundamental parallelogram equals the number of lattice points contained in it. 
Moreover, a $2$-dimensional cone $\cone$ is called {\em unimodular} if its ray generators  $u_1,u_2$ form a part of a $\Z$-basis of $N$. Note that in this case the fundamental parallelogram $\Pi(u_1,u_2)$ contains only one lattice point.

\begin{definition}
Let $\cone \subseteq N_{\R}$ be a $2$-dimensional cone. We define $\nab$ to be the convex hull of the origin and the primitive ray generators $u_1$ and $u_2$ of the given cone $\cone$, \emph{i.e.},
\begin{align*}
\nab \colonequals \conv(0,u_1,u_2)\,.
\end{align*}
We denote the {\em relative interior of} $\nab$ by $\nabint$. 

The {\em sail of} $\cone$ is the non-convex half-open lattice polygon defined as
\begin{align*}
\sail \colonequals \nab \setminus \conv(\nab \cap N \setminus \{0\})\,
\end{align*}
and its closure denoted by $\sailclosed$.
\end{definition}

The \emph{normalized volume} $\nvol(\cone)$ \emph{of} a $2$-dimensional cone $\cone$ is defined to be the normalized volume of the lattice polygon  $\theta_{\cone}$  obtained as the convex hull of the origin and all primitive ray generators of the given cone $\cone$, \emph{i.e.},
\begin{align*}
\nvol(\cone) \colonequals  \nvol(\theta_{\cone})\,.
\end{align*}

\subsection{Toric surfaces}

A {\em toric surface} $X$ is a normal variety of dimension $2$ over the field of complex numbers $\C$ containing a torus $\T \cong  (\C^*)^2$ as a Zariski open set such that the action of $(\C^*)^2$ 
on itself extends to an action on $X$. 

\begin{definition} \label{defspanfan}
Let $\Delta \subseteq N_{\R}$
be a lattice polygon with $0 \in \Delta^{\circ} \cap N$. 
We define 
$\Sigma_{\Delta}$ to be the {\em spanning fan}  of $\Delta$ in $N_{\R}$, \emph{i.e.},
$\Sigma_{\Delta} \colonequals \left\{\sigma_{\theta} \, \middle\vert \, \theta \preceq \Delta \right\}$, where
$\sigma_{\theta}$ is the cone $\R_{\geq 0} \theta$ spanned by the face $\theta \preceq \Delta$ of $\Delta$ with $\dim(\sigma_{\theta})= \dim(\theta)+1$. In particular, the spanning fan is a fan associated with 
an in general non-smooth normal projective toric surface $X_{\Sigma_{\Delta}}$. Moreover, one obtains
a resolution of singularities of $X_{\Sigma_{\Delta}}$ through the toric morphism $X_{\Sigma'_{\Delta}} \to X_{\Sigma_{\Delta}}$, where $\Sigma'_{\Delta}$ is a suitable refinement of $\Sigma_{\Delta}$. 
In our $2$-dimensional case, the rays of $\Sigma'_{\Delta}$ are spanned by all lattice points lying on the boundary of 
$\cup_{\sigma \in \Sigma_{\Delta}[2]} \sail$, where
 $\Sigma_{\Delta}[i]$ denotes the set of $i$-dimensional cones in the fan $\Sigma_{\Delta}$.
\end{definition}

We briefly recap that a normal projective surface  
is a \emph{log del Pezzo surface} if it has 
at worst log-terminal singularities 
and if its anticanonical divisor is an ample $\Q$-Cartier divisor. 
Moreover, toric log del Pezzo surfaces one-to-one correspond to LDP polygons. The fan $\Sigma$ defining a toric log del Pezzo surface $X$ is the spanning fan $\Sigma_{\Delta}$ of the corresponding LDP polygon $\Delta$. In particular,  any LDP polygon $\Delta$ is the convex hull of all primitive ray generators of elements 
in $\Sigma_{\Delta}[1]$. 

Let $\Delta \subseteq N_{\R}$ be a lattice polygon with $0 \in \Delta^{\circ} \cap N$.
Then there exists 
a $\Sigma_{\Delta}$-piecewise linear function 
$\kappa_{\Delta}: N_{\R} \to \R$
corresponding to the anticanonical divisor 
on $X_{\Sigma_{\Delta}}$ that is linear on each cone $\sigma$ of $\Sigma_{\Delta}$ and has 
value $-1$ on every 
primitive ray generator of  $1$-dimensional cones of $\Sigma_{\Delta}$. 

\medskip
In the rest of this subsection, we aim for introducing the 
stringy version of the Libgober--Wood identity from a 
geometric point of view, but restricted to log del Pezzo 
surfaces:
If $V$ is an arbitrary smooth projective surface,  
the {\em $E$-polynomial} of $V$ is defined as
\begin{align*}
E\left( V; u, v \right) \colonequals \sum_{0 \leq p,q \leq 2} 
(-1)^{p+q} h^{p,q}\left( V \right) u^{p} v^{q}\,,
\end{align*}
where $h^{p,q} \left( V \right)$ denote the Hodge numbers of $V$. 
The {\em stringy $E$-function} of a normal projective 
$\Q$-Gorenstein variety $X$ with at worst log-terminal singularities
is a rational algebraic function in two 
variables $u, v$ defined by the formula 
\begin{align*} 
E_{\str}(X; u, v) \colonequals \sum_{\emptyset \subseteq J \subseteq I}
E(D_J; u,v) \prod_{j \in J} \left( \frac{uv-1}{(uv)^{a_j +1} -1} -1 \right),
\end{align*}
where 
$\rho : Y \rightarrow X$ is some desingularization 
of $X$, whose exceptional locus 
is a union of  smooth irreducible divisors $D_1, \ldots, D_s$
with only simple normal crossings and
$K_Y=\rho^*  K_X  + \sum_{i=1}^s a_iD_i $
for some rational numbers $a_i >-1$. 
For any non-empty subset $J \subseteq I \colonequals \{1,\ldots,s\}$, we 
define  $D_J$ to be 
the smooth subvariety $\cap_{j \in J} D_j$. 
As a special case, this formula implies 
$E_{\str}(X;u,v) = E(X;u,v)$ if $X$ is smooth.

Let $X$ be a toric log del Pezzo surface associated with a fan $\Sigma$. Then 
the {\em stringy version of the Libgober--Wood identity} is given as
\begin{align*} 
\frac{d^2}{d u^2} E_{\str}\left(X;u,1\right)\Big\vert_{u=1} =
\frac16 c_2^{\str}(X) + \frac{1}{6} c_1(X).c_{1}^{\str}(X)
=\frac16 c_2^{\str}(X) + \frac{1}{6} c_1(X)^2\,,
\end{align*}
where $c^{\str}_k (X)$ denotes the
 {\em $k$-th stringy Chern class} introduced in  
\cite{MR2304329,MR2183846}. 
In particular, 
the $k$-th stringy Chern class of $X$ can be computed 
purely combinatorial via
\begin{align*}
c^{\str}_k (X)= \sum_{\sigma \in \Sigma(k)} \nvol(\sigma)\cdot [X_{\sigma} ]
\end{align*}
\cite{BS1}, where 
$[X_{\sigma}]$ denotes the class of the closed torus orbit $X_{\sigma}$ corresponding to a given cone $\sigma \in \Sigma$.
The general stringy version of the Libgober--Wood identity holding for any  projective variety  
with at worst log-terminal singularities can be found in \cite{Bat00}.

\section{Main theorem and its reduction to a local version}\label{secIntroProof}

We present a purely combinatorial proof of 
the following combinatorial identity that is equivalent to the stringy Libgober--Wood identity for log del Pezzo surfaces and relates LDP polygons to the number $12$:

\begin{theopar}[{\cite[Corollary 4.5]{BS1}}] \label{pezzo12} 
Let $\Delta \subseteq N_{\R}$ be an LDP polygon. 
Then 
\begin{align*}
12 \sum_{n \in \Delta \cap N} \left( \kappa_{\Delta}(n) + 1 \right)^2=\nvol\left(\Delta \right) +  \nvol \left(\Delta^* \right)\,,\end{align*}
where $\kappa_\Delta: N_{\R} \rightarrow \R,\, x \mapsto - \min\left\{ \lambda \in \R_{\geq 0} \, \vert \, x \in
\lambda \Delta \right\}$.
In particular, one always has $\nvol\left(\Delta \right) + 
 \nvol \left(\Delta^* \right) \geq 12$ 
and equality holds if and only if $\Delta$ is  reflexive. 
\end{theopar}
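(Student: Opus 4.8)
The plan is to localize both sides over the two-dimensional cones of the spanning fan $\Sigma_{\Delta}$. First I would note that the left-hand side only sees the interior of $\Delta$: if $n\in\partial\Delta\cap N$ then $\kappa_{\Delta}(n)=-1$, so $(\kappa_{\Delta}(n)+1)^2=0$, whereas $\kappa_{\Delta}(0)=0$ contributes $1$. For a cone $\sigma\in\Sigma_{\Delta}[2]$ with primitive ray generators $u_1,u_2$ one has $\Delta\cap\sigma=\nab=\conv(0,u_1,u_2)$, since the edge $\conv(u_1,u_2)\preceq\Delta$ is exactly the cross-section of $\sigma$ along $\partial\Delta$; and since $u_1,u_2$ are primitive, the open segments $(0,u_i)$ contain no lattice points. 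Hence every interior lattice point of $\Delta$ other than $0$ lies in the relative interior $\nabint$ of a unique triangle $\nab$, giving the clean splitting
\[
\sum_{n\in\Delta\cap N}(\kappa_{\Delta}(n)+1)^2 \;=\; 1+\sum_{\sigma\in\Sigma_{\Delta}[2]}S_{\sigma},\qquad S_{\sigma}:=\sum_{n\in\nabint\cap N}(\kappa_{\Delta}(n)+1)^2.
\]
On the right-hand side I would use $\nvol(\Delta)=\sum_{\sigma}\nvol(\sigma)$ (the triangles $\nab$ tile $\Delta$) and localize $\nvol(\Delta^*)$ by triangulating $\Delta^*$ from the origin: its vertices are the forms $\mstar$ with $\langle\mstar,u_i\rangle=-1$, each edge of $\Delta^*$ is dual to a vertex $u_\rho$ of $\Delta$, so $\nvol(\Delta^*)=\sum_\rho|\det(m_{\sigma^-},m_{\sigma^+})|$ over rays $\rho$ with incident cones $\sigma^{\pm}$. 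Distributing these dual-edge contributions to the incident cones reduces the theorem to a local identity attached to each $\sigma$, together with a bookkeeping of the additive constant.

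\textbf{Step 2 (local evaluation).} For a fixed $\sigma$ I would apply $GL_2(\Z)$ to bring it into Hirzebruch--Jung normal form $u_1=(1,0)$, $u_2=(a,p)$ with $0\le a<p$, $\gcd(a,p)=1$ and $p=\nvol(\sigma)$; then $\kappa_{\Delta}$ restricts on $\sigma$ to $x\mapsto\langle\mstar,x\rangle$ with $\mstar=(-1,(a-1)/p)$. Slicing $\nabint$ by the lattice lines $\{n_2=j\}$ for $1\le j\le p-1$, the cross-section has width $(p-j)/p<1$, so each slice carries \emph{at most one} interior lattice point, at which $\kappa_{\Delta}(n)+1$ equals the sawtooth value $\{(a-1)j/p\}$. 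This expresses $S_{\sigma}$ as a sum of squared sawtooth values, i.e.\ essentially a Dedekind sum in the residue $a\bmod p$, while the same normal form expresses the localized dual volume through the complementary residue. The sails $\sailclosed$ (equivalently the Hirzebruch--Jung continued fraction of $p/a$) organize these evaluations and handle the non-unimodular cones; for a unimodular cone one checks directly $S_{\sigma}=0$ and $\nvol(\sigma)=1$.

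\textbf{Step 3 (reciprocity and assembly).} The heart of the argument is Dedekind reciprocity, $s(a,p)+s(p,a)=-\tfrac14+\tfrac1{12}\bigl(\tfrac ap+\tfrac pa+\tfrac1{ap}\bigr)$, whose denominator $12$ is precisely the origin of the number in the theorem. Applying it to each cone and summing cyclically over $\Sigma_{\Delta}[2]$, the mixed terms should telescope around $\Delta$ (consecutive cones share a ray generator), the $-\tfrac14$-terms combine with the number of cones, and the outcome is $12\sum_{\sigma}S_{\sigma}+12=\nvol(\Delta)+\nvol(\Delta^*)$, which is the identity. The inequality is then immediate: each $S_{\sigma}\ge 0$ is a sum of squares, so $\nvol(\Delta)+\nvol(\Delta^*)=12\bigl(1+\sum_{\sigma}S_{\sigma}\bigr)\ge 12$, with equality iff every $S_{\sigma}=0$, i.e.\ iff $0$ is the only interior lattice point of $\Delta$; in dimension two this is equivalent to $\Delta$ being reflexive.

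\textbf{Main obstacle.} I expect the crux to be the assembly in Step 3 together with the dual-volume bookkeeping of Step 1: proving that the localized dual volumes recombine, via Dedekind reciprocity and the telescoping over the cyclically ordered cones, into exactly $\nvol(\Delta^*)$ and the constant $12$, and verifying cone by cone that the sawtooth evaluation of $S_{\sigma}$ matches the reciprocity terms. The $GL_2(\Z)$-reduction and the slicing of $\nabint$ are routine once set up; the genuine content is the reciprocity law supplying the number $12$.
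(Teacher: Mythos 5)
Your Step 1 and Step 2 track the paper closely: the paper also splits the left-hand side over the relative interiors $\nabint$ of the triangles of the spanning fan, and its Lemma~\ref{lemKappa} is exactly your sawtooth evaluation, giving $12\sum_{n\in\nabint\cap N}(\kappa(n)+1)^2=\tfrac{(V-1)(V-2)}{V}+12\,\saw(a,V)$ with $V=\nvol(\sigma)$. The genuine gap is Step 3. You apply reciprocity once per cone and assert that ``the mixed terms should telescope'' around $\Delta$ and that the $-\tfrac14$'s produce the constant $12$, but neither claim is substantiated, and neither is routine. After reciprocity the cone $\sigma$ contributes $-12\saw(V_\sigma,a_\sigma)-3+\bigl(\tfrac{a_\sigma}{V_\sigma}+\tfrac{V_\sigma}{a_\sigma}+\tfrac{1}{a_\sigma V_\sigma}\bigr)$; the adjacent cone $\sigma'$ shares a ray generator but its normal-form data $(V_{\sigma'},a_{\sigma'})$ is not related to $(V_\sigma,a_\sigma)$ by any identity that cancels $\saw(V_\sigma,a_\sigma)$, and the dual-edge determinant $\det(m_{\sigma},m_{\sigma'})$ attached to the shared ray is not a sum of one term depending only on $\sigma$ and one depending only on $\sigma'$. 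Moreover the $-3$ per cone gives $-3k$ where $k$ is the number of edges of $\Delta$, so extracting $+12$ from the remaining terms is essentially equivalent to the ``number 12'' theorem itself; you have relocated the entire difficulty into an unproved cancellation. You flag this yourself as the main obstacle, which is honest, but it means the proof is open exactly at its crux.

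For comparison, the paper does not telescope reciprocity around the polygon. It first proves a sharp cone-wise identity (Theorem~\ref{theoCones}): $12\sum_{n\in\nabint\cap N}(\kappa(n)+1)^2=\nvol(\nab\setminus\sail)+\sum_{i=1}^{\ksig-1}\nvol(\convm{i})$, in which the per-cone share of the dual volume is not a per-ray determinant but the union of triangles $\convm{i}=\conv(\mstar,\mstark{i},\mstark{i+1})$ determined by the sail (i.e.\ by the whole Hirzebruch--Jung expansion of $V/a$, not just by $(V,a)$). This local identity is proved by induction on $\nvol(\sigma)$ via two continued-fraction-type moves on the cone, with Dedekind reciprocity powering the induction step --- that is where reciprocity actually enters. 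The constant $12$ then comes from a separate global input: the Poonen--Rodriguez-Villegas identity $\sum_{\tau}(3-\atau)=12$ applied to the complete unimodular fan refining the union of all sails, combined with a winding-number/signed-area argument showing $\sum_\tau\det(\tau^*)=\nvol(\Delta^*)-\sum_\sigma\sum_i\nvol(\convm{i})$. If you want to salvage your route, you would need to either prove your telescoping (in effect, a Rademacher-function argument in the spirit of Hille--Skarke) or formulate and prove a precise local identity as the paper does; as written, Step 3 is a statement of the theorem rather than a proof of it.
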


\begin{example}\label{ex1}
Let $\Delta  \subseteq N_{\R}$ be the LDP polygon given as the convex hull of $(0,-1)$, $(3,2)$, and $(-1,2)$ 
(cf.~Figure~\ref{ldpfig}\subref{ldpfig1}). Then Theorem~\ref{pezzo12} yields
\begin{align*}
 12 \sum_{n \in \Delta \cap N} \left( \kappa_{\Delta}(n) + 1 \right)^2
= 12 \cdot  ( 1^2+0.5^2+0.5^2) = 18
= 12 + 6
= \nvol \left(\Delta \right) +  \nvol \left(\Delta^* \right)\,,
\end{align*}
where the dual rational polygon $\Delta^*  \subseteq M_{\R}$ is the convex hull of $(0,-0.5)$, $(3,1)$, and $(-1,1)$ (cf.~Figure~\ref{ldpfig}\subref{ldpfig2}).
\end{example}

\begin{figure}
\centering
\begin{subfigure}[h]{0.4\textwidth}
\centering
\includegraphics[]{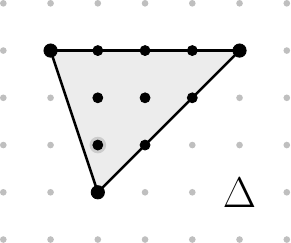}
\caption{}
\label{ldpfig1}
\end{subfigure}
\hspace{1cm}
\begin{subfigure}[h]{0.4\textwidth}
\centering
\includegraphics[]{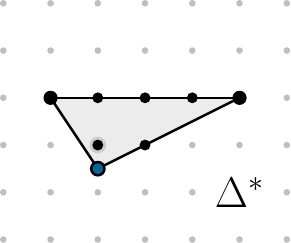}
\caption{}
\label{ldpfig2}
\end{subfigure}
\caption{{\bf LDP polygon $\boldsymbol{\Delta}$.} The origin is highlighted with a gray background. 
(A) Lattice polygon $\Delta$ with $ \nvol(\Delta)= 12$.
(B) Dual rational polygon $\Delta^*$ with rational vertex (blue) and $\nvol(\Delta^*)=6$.}
\label{ldpfig}
\end{figure} 

Our strategy relies on a decomposition of the identity in 
Theorem~\ref{pezzo12} using the spanning fan $\Sigma_{\Delta}$
of the given LDP polygon $\Delta$ and considering its $2$-dimensional cones separately. To do so, we need the following 

\begin{definition} \label{convmsigma}
Let $\cone \in \Sigma_{\Delta}[2]$  be a $2$-dimensional cone with primitive ray generators $\raygen_1$ and $\raygen_2 \in N$. Moreover,
let $\mstar \in M_\R$ be the vector dual to the edge $\raygen_1-\raygen_2$, \emph{i.e.}, $\langle \mstar , \raygen_1\rangle =-1= \langle \mstar ,\raygen_2\rangle$. We consider 
all $2$-dimensional cones $\sigma_1,\ldots, \sigma_{\ksig}$ in the refined
 fan $\Sigma_\Delta'$ of the given spanning fan $\Sigma_\Delta$ that are contained in $\sigma$. 
Therefore, we enumerate the corresponding $\ksig$ edges of the  $\sail$ from $\raygen_2$ to $\raygen_1$ consecutively, denote the corresponding dual vectors by $\mstark{1},\ldots, \mstark{\ksig}$, and
define
\begin{align*}
\convm{i} \colonequals \conv(\mstar,\mstark{i},\mstark{i+1})\,.
\end{align*}
\end{definition}

 Note that  the rays of all cones in  $\Sigma_\Delta'$ lying in $\sigma$ are spanned by the non-zero lattice points of $\sail$.
Moreover, $\mstark{1},\ldots, \mstark{\ksig}$ have integer coordinates as the resolved cones are unimodular while $\mstar$ may have rational coordinates.

\begin{theorem}\label{theoCones}
Let $\Delta \subseteq N_{\R}$ be an LDP polygon and $\cone \in \Sigma_\Delta[2]$ a
 $2$-dimensional cone of the spanning fan $\Sigma_\Delta$. Then
\begin{equation}\label{eq:sail}
12\sum\limits_{n\in \nabint \cap N}(\kappa(n)+1)^2
=\nvol\left(\nab \setminus
\sail \right)+ \sum\limits_{i=1}^{\ksig-1} \nvol(\convm{i})\,,
\end{equation}
where $\kappa \colonequals \kappa_\Delta\vert_{\cone}$ is a linear function  given as the 
restriction of the piecewise linear function $\kappa_\Delta$ to the cone $\cone$.
\end{theorem}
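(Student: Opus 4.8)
The plan is to split the left-hand side into a \emph{volume part} and a \emph{lattice defect}, evaluate the volume part in closed form, and match the defect against the second sum on the right. First I would record that on $\cone$ the function $\kappa=\kappa_\Delta|_{\cone}$ is the linear form $\langle\mstar,\cdot\rangle$, so that $\kappa(0)=0$ and $\kappa(\raygen_1)=\kappa(\raygen_2)=-1$; hence every interior point of $\nab$ satisfies $\kappa\in(-1,0)$ and $(\kappa+1)^2\in(0,1)$. Slicing $\nab$ by the level sets of $\kappa$ (a triangle with apex $0$ at level $0$ and base $\conv(\raygen_1,\raygen_2)$ at level $-1$) and normalizing the linear map $\{0,\raygen_1,\raygen_2\}\mapsto\{0,e_1,e_2\}$ to the standard simplex, I obtain
\begin{align*}
12\int_{\nab}(\kappa+1)^2\,dA=\nvol(\cone)\,,
\end{align*}
where the constant $\tfrac1{12}=\int_0^1(1-t)^2t\,dt$ is exactly what produces the factor $12$. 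Consequently the left-hand side equals $\nvol(\cone)+12\,D$, where
\begin{align*}
D\colonequals\sum_{n\in\nabint\cap N}(\kappa(n)+1)^2-\int_{\nab}(\kappa+1)^2\,dA\,,
\end{align*}
so the theorem becomes an identity about the lattice defect $D$.

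Next I would dispose of the first summand on the right. Writing $w_0=\raygen_2,w_1,\dots,w_{\ksig}=\raygen_1$ for the consecutive lattice points on the boundary of $\sailclosed$, the closed sail is tiled by the triangles $\conv(0,w_{i-1},w_i)$, each of which is unimodular and hence of normalized volume $1$; therefore $\nvol(\sailclosed)=\ksig$ and, since $\nab\setminus\sail=\conv(\nab\cap N\setminus\{0\})$ and $\nvol(\nab)=\nvol(\cone)$, we get $\nvol(\nab\setminus\sail)=\nvol(\cone)-\ksig$. Combining this with the previous step reduces the assertion to the single identity
\begin{align*}
12\,D=\sum_{i=1}^{\ksig-1}\nvol(\convm{i})-\ksig\,.
\end{align*}

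To compute $D$ I would invoke the Euler--Maclaurin principle: for a polygon and a polynomial weight the difference between the interior lattice sum and the integral localizes on the boundary, and for a quadratic weight only the Bernoulli terms $B_1=-\tfrac12$ and $B_2=\tfrac16$ survive. The weight vanishes identically on the base $\conv(\raygen_1,\raygen_2)$ (where $\kappa=-1$), so the only contributions come from the two outer rays and, crucially, from the non-unimodular corner of $\nab$ at the origin. I would resolve this corner by the sail refinement: decomposing over the unimodular subcones $\operatorname{cone}(w_{i-1},w_i)$, every corner becomes unimodular, the edge and vertex corrections become elementary (no Dedekind sums remain), and the relevant normals are precisely the dual vectors $\mstark{i}$. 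Using the Hirzebruch--Jung relations $w_{i-1}+w_{i+1}=b_iw_i$ along the sail together with the duality $\mstark{i+1}-\mstark{i}\perp w_i$, I expect the per-subcone contributions to telescope: the ``smooth'' part should account for the term $-\ksig$, while the remaining part assembles, via $\nvol(\convm{i})=|\det(\mstark{i}-\mstar,\,\mstark{i+1}-\mstark{i})|$, into $\sum_{i=1}^{\ksig-1}\nvol(\convm{i})$.

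The hard part will be this last step: controlling the Euler--Maclaurin defect of the singular corner and showing that, once the sail is resolved, the fractional (Dedekind-type) contributions cancel and telescope exactly into the dual-triangle areas. This is in effect a combinatorial form of Dedekind reciprocity adapted to the configuration $(\mstar;\mstark{1},\dots,\mstark{\ksig})$, and the bookkeeping of lattice points lying on the shared interior rays $\R_{\geq0}w_j$, together with the rational apex $\mstar$, is where the argument is most delicate.
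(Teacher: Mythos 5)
Your opening reductions are correct and match computations that appear implicitly in the paper: on $\cone$ one has $\kappa=\langle\mstar,\cdot\rangle$, the integral evaluates to $12\int_{\nab}(\kappa+1)^2\,dA=\nvol(\cone)$, and since $\sailclosed$ is tiled by $\ksig$ unimodular triangles with apex $0$ one gets $\nvol(\nab\setminus\sail)=\nvol(\cone)-\ksig$. So you have correctly reduced the theorem to the identity
\begin{equation*}
12\Bigl(\sum_{n\in\nabint\cap N}(\kappa(n)+1)^2-\int_{\nab}(\kappa+1)^2\,dA\Bigr)=\sum_{i=1}^{\ksig-1}\nvol(\convm{i})-\ksig\,.
\end{equation*}
But this identity \emph{is} the theorem; everything after that point in your proposal is a plan, not a proof, and you say so yourself. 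Two concrete obstacles stand in the way of the Euler--Maclaurin route as you have sketched it. First, after you resolve the corner at the origin by the sail, the pieces $\nab\cap\operatorname{cone}(w_{i-1},w_i)$ are \emph{not} lattice polygons: their outer vertices are the rational points where the rays $\R_{\geq0}w_j$ meet the line $\kappa=-1$, so the lattice Euler--Maclaurin formula with unimodular corners does not apply to them directly, and the boundary bookkeeping along the shared rays $\R_{\geq0}w_j$ (where the weight does not vanish) has to be done by hand. Second, the assertion that the fractional contributions ``cancel and telescope exactly into the dual-triangle areas'' $\nvol(\convm{i})$ is precisely the nontrivial content: the defect of the singular corner genuinely involves the Dedekind sum $\saw(a,V)$ (the paper's Lemma~4.4 shows the left-hand side equals $\tfrac{(V-1)(V-2)}{V}+12\,\saw(a,V)$), and eliminating it requires the full reciprocity law, not merely periodicity or a telescoping of Hirzebruch--Jung relations.

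The paper takes a different and fully explicit route: induction on $V=\nvol(\cone)$. Every non-unimodular cone is obtained from a cone $\flexhat{\cone}$ of strictly smaller volume by one of two elementary operations ($\raygen_1=\flexhat{\raygen}_1+\flexhat{w}$ or $\raygen_1=2\flexhat{\raygen}_1-\flexhat{w}$); the change of the left-hand side is computed via Dedekind reciprocity applied to $\saw(a,V)$, and the change of the right-hand side is computed by an explicit determinant calculation for both $\nvol(\nab\setminus\sail)$ and the dual triangles $\convm{i}$, after which the two differences are seen to agree. If you want to salvage your approach, the honest comparison is that your ``telescoping'' step would have to reproduce exactly this reciprocity computation; as written, the argument has a genuine gap at its core.
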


\begin{example}\label{ex2}
Let $\Delta \subseteq N_\R$ be the LDP polygon given in Example~\ref{ex1} and $\sigma \in \Sigma_{\Delta}[2]$  the $2$-dimensional cone of the spanning fan $\Sigma_{\Delta}$ with primitive ray generators $\raygen_1=(3,2)$ and 
$\raygen_2=(-1,2) \in N$ (cf. 
Figure~\ref{ldpfigII}\subref{ldpfigII1}). Then Theorem~\ref{theoCones} yields
\begin{align*}
12\sum\limits_{n\in \nabint \cap N}(\kappa(n)+1)^2
&= 12\sum\limits_{n\in \{(0,1),(1,1)\}}(\kappa(n)+1)^2
=6 = 5 + 0.5 + 0.5 \\
&=\nvol\left(\nab \setminus \sail \right)+ \nvol(\convm{1})+\nvol(\convm{2}) \\
&=\nvol\left(\nab \setminus \sail \right)+ \sum\limits_{i=1}^{\ksig-1} \nvol(\convm{i})\,,
\end{align*} 
where the $\nab=\conv((0,0),\raygen_1, \raygen_2)$, $\nab \setminus \sail= \conv(\raygen_1,\raygen_2, (0,1), (1,1))$, 
$\ksig=3$
 (cf. Figure~\ref{ldpfigII}\subref{ldpfigII1}, dotted edges), and 
 $\mstar=(0,-0.5)$, $\mstark{1}=(-1,-1)$, $\mstark{2}=(0,-1)$, $\mstark{3}=(1,-2)$  (cf. 
Figure~\ref{ldpfigII}\subref{ldpfigII2}).
\end{example}

\begin{figure}
\centering
\begin{subfigure}[h]{0.4\textwidth}
\centering
\includegraphics[]{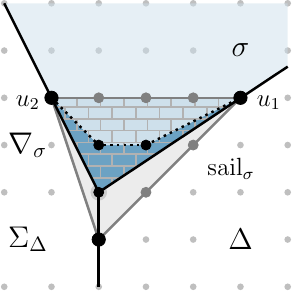}
\caption{}
\label{ldpfigII1}
\end{subfigure}
\hspace{1cm}
\begin{subfigure}[h]{0.4\textwidth}
\centering
\includegraphics[]{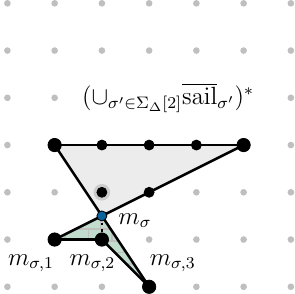}
\caption{}
\label{ldpfigII2}             
\end{subfigure}
\caption{{\bf Spanning fan $\boldsymbol{\Sigma_\Delta}$ with $\text{\bf sail}\boldsymbol{_\sigma}$ 
and dual polygon of $\boldsymbol{\cup_{\cone' \in \Sigma_{\Delta}[2]} \overline{\text{sail}}_{\cone'}}$.} 
(A) $2$-dimensional cone 
$\sigma$ (light and dark blue) of the spanning fan $\Sigma_\Delta$ of the LDP polygon $\Delta$ (gray), 
$\nabla_{\sigma}$ (brick) and $\sail$ (dark blue).
(B) Dual polygon of (non-convex) polygon $\cup_{\cone' \in \Sigma_{\Delta}[2]}  \overline{\text{sail}}_{\cone'}$ (gray) and $\cup_{i=1}^{\ksig-1} \convm{i}$ (green).}
\label{ldpfigII}
\end{figure} 

Theorem~\ref{theoCones}, which we prove combinatorially in Section~\ref{secProof}, is our main ingredient for our combinatorial proof of the identity in Theorem~\ref{pezzo12}. Summing up  Equation~\eqref{eq:sail} over all $2$-dimensional cones of the spanning fan $\Sigma_\Delta$ of our given LDP polygon $\Delta$, we obtain
\begin{align*}
 12\sum\limits_{n\in(\Delta\cap N)\setminus\{0\}}(\kappa(n)+1)^2& = \nvol(\Delta) - \nvol (\textstyle{\bigcup_{\sigma\in\Sigma_\Delta[2]}}
\sailclosed ) 	+\sum\limits_{\sigma\in\Sigma_\Delta[2]}\sum\limits_{i=1}^{\ksig-1} \nvol(\convm{i})\,.
\end{align*}

Comparing this identity with the one in Theorem~\ref{pezzo12}, it suffices to show 
\begin{align}\label{eq:diff1}
12&= \nvol \left(\Delta^* \right) +\nvol
(\textstyle{\bigcup_{\sigma\in\Sigma_\Delta[2]}}\sailclosed ) 	 
-\sum\limits_{\sigma\in\Sigma_\Delta[2]}\sum\limits_{i=1}^{\ksig-1} \nvol(\convm{i})\,.
\end{align}

We will consider the union $\cup_{\sigma\in\Sigma_\Delta[2]} \sailclosed$ 
of all closed sails as a non-convex polygon and denote it by $\fineFan$. 
Furthermore, we associate with it a fan $\Sigma_{\fineFan}$ having 
 rays that are spanned by the boundary lattice points of  $\fineFan$.
Note that this fan is unimodular, as all cones are unimodular cones by construction. In addition, this fan is \emph{complete}, meaning the union of its cones is the whole space $\R^2$. 
For such a fan, every $1$-dimensional cone $\tau$ with primitive ray generator $v$ is contained in precisely two $2$-dimensional cones $\sigma_l=\conv(v,v_l)$ and $\sigma_r=\conv(v,v_r)$ of this fan, where $v_l$ and $v_r$ are also primitive ray generators. Moreover, there exists a unique integer $\atau$ such that $v_l+v_r=\atau v$. Now we apply the following 
	
\begin{theopar}[{\cite[Subsection 8.1]{PRV00}}]\label{12thm}
 Let $\Sigma$ be a complete unimodular fan in $\R^2$. Then
\begin{align*}
12=\sum\limits_{\tau\in\Sigma[1]} (3-\atau)\,.
 \end{align*}
\end{theopar}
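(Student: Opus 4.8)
The plan is to prove the equivalent reformulation $\sum_{\tau \in \Sigma[1]} \atau = 3d-12$, where $d \colonequals \#\Sigma[1]$, by induction on $d$, exploiting the fact that $\sum_{\tau}(3-\atau)$ is left unchanged by the elementary refinements out of which every complete unimodular fan is built. First I would fix a counterclockwise cyclic enumeration $v_1,\dots,v_d$ of the primitive ray generators, so that consecutive pairs $\{v_i,v_{i+1}\}$ are positively oriented lattice bases (indices read modulo $d$), and record the defining relations $v_{i-1}+v_{i+1}=a_i v_i$ with $a_i \colonequals a_{\tau_i}\in\Z$. Pairing the $i$-th relation with $v_{i-1}$ and using $\det(v_{i-1},v_i)=1$ gives the convenient reformulation $a_i=\det(v_{i-1},v_{i+1})$, which keeps the bookkeeping below transparent.

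The heart of the argument is the behaviour under a single star subdivision. Given adjacent rays $v_i,v_{i+1}$, insert the new primitive generator $w=v_i+v_{i+1}$; since $\det(v_i,w)=\det(w,v_{i+1})=\det(v_i,v_{i+1})=1$, both new cones are again unimodular, so the refined fan stays complete and unimodular. Substituting $w=v_i+v_{i+1}$ into the three affected relations shows that the two neighbours satisfy $a_i\mapsto a_i+1$ and $a_{i+1}\mapsto a_{i+1}+1$, while the new ray carries $a_w=1$. Hence $\sum_\tau(3-\atau)$ changes by $(-1)+2+(-1)=0$, and correspondingly $\sum_\tau \atau$ increases by exactly $3$ as $d$ increases by $1$, matching the right-hand side $3d-12$. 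This invariance is the engine of the induction.

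It then remains to verify the identity on the fans that cannot be coarsened further, together with the structural fact that every complete unimodular fan is reached from such a minimal fan by iterated star subdivisions. For the base cases I would check the claim directly on the minimal smooth complete toric surfaces: the fan of $\mathbb{P}^2$ ($d=3$, all $a_i=-1$, so $\sum(3-a_i)=12$) and the Hirzebruch fans $\Sigma_{\mathbb{F}_a}$ ($d=4$, with $a$-values $0,a,0,-a$ in cyclic order, again giving $\sum(3-a_i)=12$). The reduction step---that for $d\geq 5$ there is always a ray $\tau$ with $\atau=1$, i.e.\ a generator $v_i=v_{i-1}+v_{i+1}$ whose removal merges its two adjacent cones into a unimodular one---is the single genuinely nontrivial input, and I expect it to be the main obstacle. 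It is precisely the combinatorial content of the classification of smooth complete toric surfaces, and I would invoke it in the form of \cite[\S 2.5]{Ful93} (equivalently \cite{CLS11}); once it is in hand the induction closes immediately, since any fan with $d\geq 5$ can be coarsened to one with $d-1$ rays without leaving the class of complete unimodular fans.

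As a conceptual aside explaining where the number $12$ comes from, the same identity can be read off from the monodromy relation $\prod_{i=1}^{d} S\,T^{a_i}=I$ in $SL_2(\Z)$, obtained by tracking the basis change $(v_i,v_{i+1})\mapsto(v_{i+1},v_{i+2})=(v_i,v_{i+1})\,S\,T^{a_{i+1}}$ once around the fan, where $S=\left(\begin{smallmatrix}0 & -1 \\ 1 & 0\end{smallmatrix}\right)$ and $T=\left(\begin{smallmatrix}1 & 1 \\ 0 & 1\end{smallmatrix}\right)$. Evaluating the Rademacher--Dedekind function on a product that equals the identity extracts exactly $\sum_{i}(3-a_i)=12$, which is the modular origin of the constant. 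I would keep the elementary induction as the main line of proof and mention this viewpoint only as motivation.
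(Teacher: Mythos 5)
Your argument is correct, but it is not the route the paper takes: the paper does not prove Theorem~\ref{12thm} at all, it imports it from \cite[Subsection 8.1]{PRV00}, and the proof given there is precisely the $SL_2(\Z)$ argument you relegate to your closing aside --- the relation $\prod_i ST^{a_i}=\mathrm{id}$, lifted to the universal cover where the winding number appears, evaluated against the Rademacher quasimorphism, with the constant $12$ coming from $H_1(SL_2(\Z))\cong\Z/12\Z$. Your main line --- invariance of $\sum_\tau(3-\atau)$ under star subdivision, plus direct verification on the fans of $\mathbb{P}^2$ and the Hirzebruch surfaces $\mathbb{F}_a$ --- is a genuinely different and perfectly sound proof; I checked the bookkeeping ($a_w=1$ for the new ray, $a_i\mapsto a_i+1$ for its two neighbours, net change $2-1-1=0$) and the base cases ($d=3$ forces the $\mathbb{P}^2$ fan, $d=4$ forces a Hirzebruch fan, both giving $12$). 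The trade-off is where the real work sits. Your induction is elementary except for the blow-down lemma (for $d\geq 5$ some ray has $\atau=1$), which is the combinatorial core of the classification of smooth complete toric surfaces and is comparable in depth to the theorem itself; you are right to flag it as the main obstacle, and you should make sure to cite a proof of it (Fulton \S 2.5, or Oda) that is independent of the identity $\sum_i a_i=3d-12$, since that identity is sometimes \emph{derived from} the classification and circularity would otherwise threaten. The $SL_2(\Z)$ route avoids any classification input, explains conceptually where $12$ comes from, and generalizes to chains with higher winding number (as exploited in \cite{HS02,KN12}), which is also why the paper leans on the \emph{reasoning} of \cite[Subsection 8.1]{PRV00} (signed lengths of dual edges) beyond the bare statement of the theorem.
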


Combining this theorem with the $2$-dimensional property 
\begin{align*}
\nvol  (\textstyle{\bigcup_{\sigma\in\Sigma_\Delta[2]}}\sailclosed )  
=\sum_{\tau\in\Sigma_{\fineFan}[1]} 1
\end{align*}
and  Equation~\eqref{eq:diff1}, we arrive at
\begin{align*}
\sum\limits_{\tau\in\Sigma_{\fineFan}[1]} (2-\atau)
&=\nvol\left(\Delta^* \right) -\sum\limits_{\sigma\in\Sigma_{\Delta}[2]}\sum\limits_{i=1}^{\ksig-1}
\nvol(\convm{i})\,.
\end{align*}
In order to verify this identity, we again use the fact that $\Sigma_{\fineFan}$ is a complete unimodular fan in 
$\R^2$. The reasoning  
at the end of Section~8.1 in~\cite{PRV00}
can be applied to our case and states in particular that the sum
\begin{align*}
\sum\limits_{\tau\in\Sigma_{\fineFan}[1]} (2-\atau)
\end{align*}
equals the sum of signed lengths of dual edges $\tau^*$ corresponding to $\tau$.
Furthermore, the proof also shows that the sum of signed lengths of dual edges can be 
expressed as a sum of signed volumes. In particular,
\begin{align*}
\sum\limits_{\tau\in\Sigma_{\fineFan}[1]} (2-\atau)= \sum\limits_{\tau\in\Sigma_{\fineFan}[1]} 
\det(\tau^*) \,, 
\end{align*}
where $\det (\tau^*)$  is the determinant of the $2 \times 2$ matrix with the two vertices of $\tau^*$ as columns (respecting the direction of the edges in the chain) so that $\vert \det(\tau^*) \vert= \nvol(\conv(0,\tau^*))$. 
 It remains to deduce  
\begin{equation}\label{eq:sumdualvol}
\sum\limits_{\tau\in\Sigma_{\fineFan}[1]} \det(\tau^*) 
= \nvol \left(\Delta^* \right)- \sum\limits_{\sigma\in\Sigma_{\Delta}[2]}\sum\limits_{i=1}^{\ksig-1}
\nvol(\convm{i})\,.
\end{equation}

Let $\gamma$ be the closed curve corresponding to the chain of dual edges $\tau^*$ whose orientation is induced by the signs.
Observe that $\sum_{\tau\in\Sigma_{\fineFan}[1]} \det(\tau^*) = \int_\gamma \alpha$, where $\alpha$ is a 1-form such that $\frac{1}{2}d\alpha$ is the standard volume form on $\R^2$, 
see literature on differential forms, \emph{e.g.}, \cite[Section 37.3]{N21}.
We split the curve $\gamma$ into simple closed curves $\gamma_0$ and $\gamma_\sigma$ for $\sigma \in \Sigma_\Delta[2]$:
$\gamma_0$ runs through the boundary of $\Delta^*$ and $\gamma_\sigma$  through $\mstar, \mstark{1}, \ldots, \mstark{\ksig}, \mstar$. The integral splits into a sum of integrals over these simple closed curves, where 
$\int_{\gamma_0} \alpha = \nvol(\Delta^*)$ and $\int_{\gamma_\sigma} \alpha$ is the negative normalized volume of the area bounded by $\gamma_\sigma$ (the winding number is $-1$).
This area is subdivided into the triangles $\convm{i}$ (cf. Definition \ref{convmsigma}).

This shows Equation~\eqref{eq:sumdualvol} and thus finishes our combinatorial proof of the identity in Theorem~\ref{pezzo12}.
	
\begin{example}
We continue with the LDP polygon $\Delta$ studied in Example~\ref{ex1} and~\ref{ex2} and consider the dual polygon to $\cup_{\cone' \in \Sigma_{\Delta}[2]} \overline{\text{sail}}_{\cone'}$ (cf. Figure~\ref{figure34}). 
Equation \eqref{eq:sumdualvol} holds because
\begin{align*}
\sum\limits_{\tau\in\Sigma_{\fineFan}[1]} \det (\tau^*) = 4+2-1-1+1 
&= 6-0.5-0.5 \\
&=\nvol \left(\Delta^* \right)- \sum\limits_{\sigma\in\Sigma_{\Delta}[2]}\sum\limits_{i=1}^{\ksig-1}
\nvol(\convm{i})\,.
\end{align*}
\end{example}

\begin{figure}
\centering
\begin{subfigure}[h]{0.4\textwidth}
\centering 
\includegraphics[]{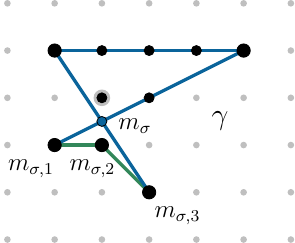}
\caption{}
\label{fig:dualPolArea}
\end{subfigure}
\hspace{1cm}
\begin{subfigure}[h]{0.4\textwidth}
\centering
\includegraphics[]{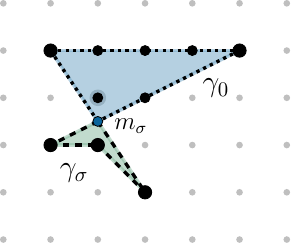}
\caption{}
\label{fig:dualPolAreaMarked2}          
\end{subfigure}
\caption{{\bf Dual polygon $\boldsymbol{(\cup_{\cone' \in \Sigma_{\Delta}[2]} \overline{\text{sail}}_{\cone'})^*}$
with closed curve $\boldsymbol{\gamma}$.} 
(A) Edges of $(\cup_{\cone' \in \Sigma_{\Delta}[2]} \overline{\text{sail}}_{\cone'})^*$  given as a closed chain $\gamma$ marked according to sign: blue for positive sign, green for negative sign. The interior intersection point of the two positive edges is $\mstar$.
(B) Signed volumes for the curve $\gamma$ split into $\gamma_0$ (blue area counted positive) and $\gamma_\sigma$ (green area counted negative).}
\label{figure34}
\end{figure} 

\section{Proving the cone-wise identity}\label{secProof}

We distinguish two cases in our proof of Theorem~\ref{theoCones}.
For unimodular cones, we easily see that the left hand side and the right hand side of Equation~\eqref{eq:sail} both vanish. For non-unimodular cones, a combinatorial proof by induction will be given in the rest of this section. 

Our reasoning is based on the fact that for any non-unimodular cone $\sigma$ with primitive ray generators $\raygen_1$ and $\raygen_2$ all interior lattice points of the fundamental parallelogram $\Pi(\raygen_1,\raygen_2)$ can be generated by some vector $w=\frac{1}{V}(a\raygen_1+\raygen_2)$, where $a\in\N$ and $V\colonequals \nvol(\sigma)= \nvol(\Pi(\raygen_1,\raygen_2))/2$. More precisely, these lattice points are represented in the $(\raygen_1,\raygen_2)$-basis as
\begin{equation}\label{eq:defiw}
    \lfloor iw\rfloor =\frac{1}{V}((ia \modulo V)\raygen_1+i\raygen_2)
\end{equation}
for $i=0,1,\dots, V$, see Figure~\ref{fig:coordinates}. 

\begin{figure}
\centering
\begin{subfigure}[h]{0.4\textwidth}
\centering
\includegraphics[]{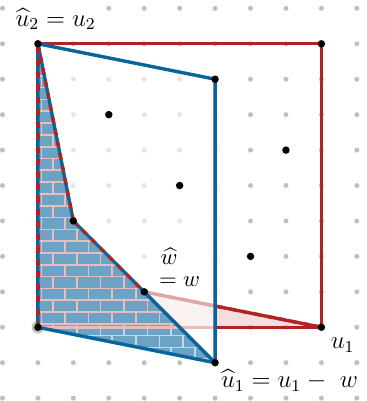}
\caption{}
\label{fig:coordinates1}
\end{subfigure}
\hspace{1cm}
\begin{subfigure}[h]{0.4\textwidth}
\centering
\includegraphics[]{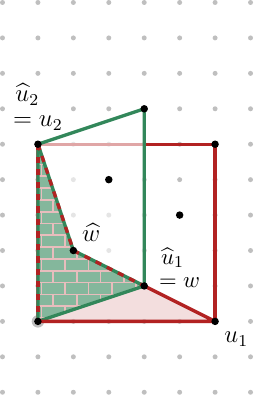}
\caption{}
\label{fig:coordinates2}
\end{subfigure}
\caption{{\bf Induction illustration of both cone generation operations.}
Both cases show the fundamental parallelograms
${\Pi(\raygen_1,\raygen_2)}$ of the given cone $\cone$ and ${\Pi(\raygenhat_1,\raygenhat_2)}$ of 
the cone $\flexhat{\cone}$ from which $\cone$ is constructed together with
the generating vectors ${w}$ and ${\flexhat{w}}$ for the respective base changes.
(A) Case I: $V=8, a=3$ and $\protect\flexhat{V}=5, \protect\flexhat{a}=3$.
(B) Case II: $V=5, a=3$ and $\protect\flexhat{V}=3, \protect\flexhat{a}=1$.}
\label{fig:coordinates}
\end{figure} 

Our main idea is to argue by induction over the volume $V= \nvol(\sigma)$ of a cone $\cone$ with 
primitive ray generators ${\raygen}_1$ and  ${\raygen}_2$, \emph{i.e.}, 
$\cone =  \text{cone}({\raygen}_1, {\raygen}_2)$.
For $V=1$ the cone is unimodular and thus Theorem~\ref{theoCones} holds.
We proceed by assuming that we are given a non-unimodular cone ${\cone}$ with 
volume $V >1$. 
Every such cone ${\cone}$ can be constructed from some other cone $\flexhat{\cone}$
with primitive ray generators $\flexhat{\raygen}_1$ and $\flexhat{\raygen}_2$ and strictly smaller 
volume, \emph{i.e.}, $\flexhat{V} \colonequals \nvol(\flexhat{\cone}) < \nvol(\sigma)=V$. In order to determine this cone $\flexhat{\cone}$, we consider the three consecutive lattice points $\raygen_1$, $w$, and $v$ on the boundary of $\sail$. 
As the two cones $ \text{cone}(\raygen_1,w)$ and $ \text{cone}(w,v)$ are unimodular, we deduce that $\raygen_1+v=\lambda w$ for some $\lambda\in\N$ with $\lambda\geq 2$. If $\lambda>2$, 
we define   $\flexhat{\cone}$ to be the cone generated by $\flexhat{\raygen}_1 \colonequals \raygen_1-w$ and 
$\flexhat{\raygen}_2 \colonequals \raygen_2$. Note that  $\flexhat{\cone}$
 has the same lattice points in its sail as $\cone$ except $\raygen_1$ (which is replaced by $\raygen_1-w$) and has strictly smaller volume, see Figure~\ref{fig:coordinates}\ref{fig:coordinates1} for an illustration. 
 If $\lambda=2$, the three lattice points $\raygen_1$, $w$, and $v$ are collinear and
 we define $\flexhat{\cone}$ to be the cone generated by 
 $\flexhat{\raygen}_1 \colonequals w$ and $\flexhat{\raygen}_2 \colonequals \raygen_2$. 
 Note that  $\overline{\text{sail}}_{\flexhat{\cone}} \cap N = \sailclosed \cap \, N$ and $\flexhat{\cone}$ has strictly smaller volume than 
 $\cone$,
see Figure~\ref{fig:coordinates}\ref{fig:coordinates2}.
Therefore, every non-unimodular cone ${\cone}$ can be obtained from some other cone $\flexhat{\cone}= \text{cone}(\flexhat{\raygen}_1,\flexhat{\raygen}_2)$ with strictly smaller volume by one of the following two operations: 
\begingroup
\flausr
\begin{alignat*}{4}
\qquad&\phantom{\mathrm{I}}\mathrm{I.}\ \flexhat{\cone}\rightarrow{\cone}= \text{cone}(\flexhat{\raygen}_1+\flexhat{w},\flexhat{\raygen}_2),&&\quad \flexhat{\raygen}_1\mapsto{\raygen}_1=\flexhat{\raygen}_1+\flexhat{w},&&\quad \flexhat{\raygen}_2\mapsto{\raygen}_2=\flexhat{\raygen}_2\, , &&
\\
\qquad&\mathrm{II.}\ 
\flexhat{\cone}\rightarrow{\cone}= \text{cone}(2\flexhat{\raygen}_1-\flexhat{w},\flexhat{\raygen}_2),&&\quad \flexhat{\raygen}_1\mapsto{\raygen}_1=2\flexhat{\raygen}_1-\flexhat{w},&&\quad \flexhat{\raygen}_2\mapsto{\raygen}_2=\flexhat{\raygen}_2\,, &&
\end{alignat*}
\endgroup
where $\flexhat{w}$ is defined analogously as $w$ above, see Figure~\ref{fig:coordinates}.
By construction, we can easily deduce the properties
\begingroup
\flausr
\begin{alignat}{4}
\qquad&\phantom{\mathrm{I}}\mathrm{I.}\  \flexhat{V}\mapsto{V}=\flexhat{V}+\flexhat{a},&&\quad a=\flexhat{a},&&\quad {w}=\flexhat{w}\, , &&\label{eq:volChange_CaseI}\\
\qquad&\mathrm{II.}\  \flexhat{V}\mapsto{V}=2\flexhat{V}-\flexhat{a},&&\quad a=\flexhat{V},&&\quad {w}=\flexhat{\raygen}_1\,. &&\label{eq:volChange_CaseII}
\end{alignat}
\endgroup

As induction hypothesis we assume that Equation~\eqref{eq:sail} holds for all cones $\flexhat{\cone}$ with volume $\flexhat{V}$ strictly smaller than ${V}$. Therefore, it suffices to show that Equation~\eqref{eq:sail} still holds when the cone $\flexhat{\cone}$ is changed to ${\cone}$. We consider the left hand side (LHS) and the right hand side (RHS) of Equation~\eqref{eq:sail} separately and determine the differences of the new and old values associated to ${\cone}$ and $\flexhat{\cone}$, that is ${\LHS}-\flexhat{\LHS}$ and ${\RHS}-\flexhat{\RHS}$, respectively. Comparing these differences, we will see that they coincide. This finishes the induction step.

\subsection{Left hand side of Equation~(\ref{eq:sail})}

First, we will express  the left hand side of Equation~\eqref{eq:sail} 
 in terms of the volume and the sawtooth function.

\begin{defpar}[{\cite[Chapter 1, Introduction]{RG72}}]
\label{defnsawtooth}
\label{defndedekindsum}
Let $x$ be a rational number. Then 
\begin{align*}
\dedl x\dedr \colonequals \left\{ 
\begin{array}{ll}
x -[x] - 1/2 & \text{if } x \in \Q \setminus \Z, \\
0 & \text{if } x \in \Z \\
\end{array} \right.
\end{align*}
defines the \emph{sawtooth function} of period $1$, where 
$[x]$ denotes the greatest integer not exceeding $x$.
Given integers $h,k$ with $\gcd(h,k)=1$ and $k \geq 1$, the \emph{Dedekind sum} is defined as
\begin{align*}
\saw(h,k)&\colonequals \sum_{i =1}^k \dedl\frac{h i}{k}\dedr \dedl\frac{i}{k}\dedr\,.
\end{align*}
\end{defpar}

\begin{rempar}\label{rem1}
Let $h,k,m,$ and $i$ be integers. By the periodicity of the sawtooth function we immediately see that 
$\dedl \frac{(h-mk)i}{k}\dedr =\dedl\frac{h i}{k}\dedr$. Thus
\begin{align*}
\saw(h,k) = \saw(h-mk,k) \,\, \text{ and } \,\,\saw(-h,k) = - \saw(h,k)\,,
\end{align*}
where the last equation holds since the sawtooth function is odd, that is $\dedl -x \dedr=- \dedl x\dedr$ \cite[Chapter 3, Elementary Properties]{RG72}.
\end{rempar}

\begin{lempar}[{\cite[Chapter 2, Lemma 2, Theorem 1]{RG72}}]
\label{lem12}
Let $h$ and $k$ be two integers with $\gcd(h,k)=1$. Then  
\begin{align*}
\saw(1,k)= - \frac{1}{4} +\frac{1}{6k}+\frac{k}{12}
=\frac{1}{12k} (k-1)(k-2)
\end{align*}
and
\begin{align*}
\saw(h,k) + \saw(k,h) = - \frac{1}{4}  + \frac{1}{12} \left(\frac{h}{k}+\frac{1}{hk}+\frac{k}{h} \right)\,.
\end{align*}
\end{lempar}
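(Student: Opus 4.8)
The plan is to treat the two displayed identities separately, since the evaluation of $\saw(1,k)$ is a short direct computation while the reciprocity law is the real content.

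For $\saw(1,k)$ I would simply unwind Definition~\ref{defnsawtooth}. Because $\gcd(1,k)=1$, the summand is $\dedl i/k\dedr^2$; the index $i=k$ contributes $\dedl 1\dedr=0$, and for $1\le i\le k-1$ we have $0<i/k<1$, so $\dedl i/k\dedr=i/k-\tfrac12$. Hence $\saw(1,k)=\sum_{i=1}^{k-1}\left(\tfrac ik-\tfrac12\right)^2$. Expanding the square and inserting $\sum_{i=1}^{k-1} i=\tfrac{k(k-1)}2$ and $\sum_{i=1}^{k-1} i^2=\tfrac{k(k-1)(2k-1)}6$, I would collect terms to get $\tfrac{(k-1)(k-2)}{12k}$; writing $(k-1)(k-2)=k^2-3k+2$ and dividing termwise by $12k$ then produces the equivalent form $-\tfrac14+\tfrac1{6k}+\tfrac k{12}$.

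For reciprocity I would first rewrite each sawtooth through the floor function: for $1\le\mu\le k-1$ one has $k\nmid h\mu$, so $\dedl h\mu/k\dedr=h\mu/k-\lfloor h\mu/k\rfloor-\tfrac12$ and $\dedl\mu/k\dedr=\mu/k-\tfrac12$. Expanding the product and summing decomposes $\saw(h,k)$ into elementary power sums (evaluated immediately from the formulas above together with $\sum_{\mu=1}^{k-1}\lfloor h\mu/k\rfloor=\tfrac{(h-1)(k-1)}2$) plus one genuinely bilinear term $B_k\colonequals\sum_{\mu=1}^{k-1}\mu\lfloor h\mu/k\rfloor$. The analogous expansion of $\saw(k,h)$ yields the symmetric term $B_h$, and after adding the two and cancelling the elementary pieces everything reduces to evaluating the combination $\tfrac1kB_k+\tfrac1hB_h$.

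The crux, and the step I expect to be the main obstacle, is this bilinear combination: $B_k$ has no self-contained closed form, and resolving it is exactly where the symmetry between $h$ and $k$ must be exploited. I would interpret $B_k$ and $B_h$ as first-moment lattice-point counts over the open rectangle $(0,k)\times(0,h)$ split by the diagonal $ky=hx$, which meets no interior lattice point because $\gcd(h,k)=1$. The central symmetry $(x,y)\mapsto(k-x,h-y)$ interchanges the two open triangles and lets me rewrite $\tfrac1kB_k+\tfrac1hB_h$ as $\tfrac{(k-1)(h-1)}2+\tfrac1{hk}\sum_{hx>ky}(hx-ky)$, so that only the single weighted count $S\colonequals\sum_{hx>ky}(hx-ky)$ over sub-diagonal lattice points remains; this factor is the source of the $\tfrac1{hk}$ term in the answer. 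Evaluating it (for each $x$, writing $hx=k\lfloor hx/k\rfloor+r_x$ with $0<r_x<k$ and summing the residues $r_x$) and substituting back assembles the right-hand side $-\tfrac14+\tfrac1{12}\bigl(\tfrac hk+\tfrac1{hk}+\tfrac kh\bigr)$. The only delicate book-keeping is at the boundary indices $\mu\in\{0,k\}$ (resp. $\nu\in\{0,h\}$), where the sawtooth vanishes but the floor terms do not, so these must be excluded consistently throughout.
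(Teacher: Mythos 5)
The paper offers no proof of this lemma at all --- it is imported verbatim from Rademacher--Grosswald \cite{RG72} --- so there is nothing in the text to compare your argument against; I can only judge it on its own terms. Your computation of $\saw(1,k)$ is complete and correct. For the reciprocity law, the route you choose (expand both sawtooth factors via the floor function, reduce to the bilinear sums $B_k=\sum_{\mu}\mu\lfloor h\mu/k\rfloor$ and $B_h$ using $\sum_{\mu=1}^{k-1}\lfloor h\mu/k\rfloor=\tfrac{(h-1)(k-1)}{2}$, reinterpret $\tfrac1kB_k+\tfrac1hB_h$ as a first-moment count over the two open triangles of $(0,k)\times(0,h)$, and use the central symmetry to reduce everything to $S=\sum_{hx>ky}(hx-ky)$) is the classical elementary lattice-point proof; I verified that these reductions are sound and that they assemble to the stated right-hand side.

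The one step that is genuinely under-specified is the evaluation of $S$. ``Summing the residues $r_x$'' does not by itself suffice: for fixed $x$ the inner sum over $y$ is an arithmetic progression with value $q_xr_x+k\binom{q_x}{2}$, where $q_x=\lfloor hx/k\rfloor$, so after substituting $q_x=(hx-r_x)/k$ you are left with the power sums $\sum x^2$, $\sum r_x$ and $\sum r_x^2$ (the cross terms $\sum xr_x$ cancel), and you must invoke the fact that $(r_x)_{x=1}^{k-1}$ is a permutation of $(1,\dots,k-1)$ --- the same fact the paper exploits in the proof of Lemma~\ref{lemKappa} --- to evaluate the second moment of the residues, not only their sum. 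With that observation made explicit the argument closes; without it the ``source of the $1/(hk)$ term'' is named but not actually computed. In short: correct strategy, correct endpoints, one pivotal computation left as a gesture rather than a proof.
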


\begin{lemma}\label{lemKappa}
	For every $2$-dimensional cone $\cone$ with primitive ray generators $\raygen_1$ and $\raygen_2$, 
	we have
	\begin{align}\label{eq:LHS}
	12\sum\limits_{n\in\nabint\cap N}(\kappa(n)+1)^2&=\frac{(V-1)(V-2)}{V} +12 \cdot \saw(a,V)\,,
	\end{align}
	where $V= \nvol(\sigma)$ and $a\in\N$ is such that all interior lattice points of $\Pi(\raygen_1,\raygen_2)$ are generated by $w=\frac{1}{V}(a\raygen_1+\raygen_2)$.
\end{lemma}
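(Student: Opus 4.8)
The plan is to parametrise the interior lattice points of $\nab$ explicitly, rewrite the left-hand side as a sum of squares of sawtooth values, and recognise the result as a Dedekind sum plus a multiple of $\saw(1,V)$, which Lemma~\ref{lem12} evaluates in closed form.

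First I would work in $(\raygen_1,\raygen_2)$-coordinates. Since $\kappa$ is linear on $\cone$ with $\kappa(\raygen_1)=\kappa(\raygen_2)=-1$, a point $n=\alpha\raygen_1+\beta\raygen_2$ has $\kappa(n)+1=1-\alpha-\beta$. By Equation~\eqref{eq:defiw} the $V$ lattice points of the half-open parallelogram $\Pi(\raygen_1,\raygen_2)$ are exactly $\lfloor i w\rfloor$ with coordinates $(\tfrac{r_i}{V},\tfrac{i}{V})$ for $i=0,\dots,V-1$, where $r_i\colonequals ia \bmod V$; these have pairwise distinct second coordinates. I would verify that the interior lattice points of the triangle $\nab$ are precisely those with $1\le i\le V-1$ and $r_i+i<V$, so that, setting $s_i\colonequals V-r_i-i$,
\begin{align*}
12\sum_{n\in\nabint\cap N}(\kappa(n)+1)^2=12\sum_{\substack{1\le i\le V-1\\ r_i+i<V}}\left(\frac{s_i}{V}\right)^2.
\end{align*}

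Next I would symmetrise. The key arithmetic input is $\gcd(a,V)=1$, which holds because $\{\raygen_1,w\}$ is a $\Z$-basis of $N$ (as $\text{cone}(\raygen_1,w)$ is unimodular) and in that basis $\raygen_2=-a\raygen_1+Vw$ is primitive. Hence $i\mapsto V-i$ is an involution of $\{1,\dots,V-1\}$ with $r_{V-i}=V-r_i$ and thus $s_{V-i}=-s_i$; this pairs the points inside $\nab$ ($s_i>0$) with those outside ($s_i<0$) and annihilates the edge points ($s_i=0$), giving $\sum_{s_i>0}s_i^2=\tfrac12\sum_{i=1}^{V-1}s_i^2$. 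Writing $\tfrac{s_i}{V}=-\big(\dedl\tfrac{ia}{V}\dedr+\dedl\tfrac{i}{V}\dedr\big)$ and expanding the square, the cross term sums to $\saw(a,V)$, while $\sum_{i=1}^{V-1}\dedl\tfrac{ia}{V}\dedr^2=\sum_{i=1}^{V-1}\dedl\tfrac{i}{V}\dedr^2=\saw(1,V)$, the first equality since multiplication by $a$ permutes the nonzero residues modulo $V$. Collecting terms gives $12\,\saw(1,V)+12\,\saw(a,V)$, and Lemma~\ref{lem12} replaces $12\,\saw(1,V)$ by $\tfrac{(V-1)(V-2)}{V}$, which is the asserted identity.

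The step I expect to be the main obstacle is the combinatorial bookkeeping: confirming that the parallelogram points $(\tfrac{r_i}{V},\tfrac{i}{V})$ capture exactly the interior points of $\nab$ with the clean cut-off $r_i+i<V$, correctly discarding the lattice points on the far edge $\conv(\raygen_1,\raygen_2)$ (where $s_i=0$), and pinning down $\gcd(a,V)=1$ so that both the $i\mapsto V-i$ symmetry and the residue-permutation identity apply. Once these facts are secured, the remaining passage from sawtooth sums to Dedekind sums is routine.
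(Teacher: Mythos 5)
Your proposal is correct and follows essentially the same route as the paper: parametrize the parallelogram points as $\lfloor iw\rfloor$, use the point symmetry (your $i\mapsto V-i$ is the paper's ``symmetry of $\kappa$'') to replace the triangle sum by half the parallelogram sum, expand in sawtooth values, and finish with Lemma~\ref{lem12}. The only difference is cosmetic: you write $\kappa(n)+1=-\bigl(\dedl\tfrac{ia}{V}\dedr+\dedl\tfrac{i}{V}\dedr\bigr)$ directly, which makes the cross term a Dedekind sum at once, whereas the paper writes $-\dedl\tfrac{ai}{V}\dedr+\tfrac12-\tfrac{i}{V}$ and recovers $\saw(a,V)$ via $\sum_i\dedl\tfrac{ai}{V}\dedr=0$.
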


\begin{proof}
Without loss of generality, we restrict ourselves to non-unimodular cones $\cone=\text{cone}(\raygen_1,\raygen_2)$.
Since $\raygen_1$ and $\raygen_2$ are primitive and $Vw=a\raygen_1+\raygen_2$, we have $\gcd(V,a)=1$. Furthermore, we denote by $(\raygen_1^*,\raygen_2^*)$ the dual basis to $(\raygen_1,\raygen_2)$ with respect to the standard scalar product. Observe that $\kappa=-\raygen_1^*-\raygen_2^*$ by construction. Therefore, we get
\begin{align*}
2\sum\limits_{n\in\nabint\cap N}(\kappa(n)+1)^2=\sum\limits_{i=1}^{V-1}(\kappa(\lfloor iw\rfloor)+1)^2
\end{align*}
which  follows from the symmetry of $\kappa$. Furthermore, as $\kappa=-\raygen_1^*-\raygen_2^*$ and $\gcd(V,a)=1$, we deduce from Equation~\eqref{eq:defiw} and Definition~\ref{defnsawtooth} that 
\begin{align*}
2V \left(1+\kappa(\lfloor iw\rfloor) +\dedl \frac{ai}{V}\dedr\right)&= 
2V- 2(ia \modulo V) -2i + 2ai - 2V\left[ \frac{ai}{V} \right] -V \\
&= V-2i+ 2 \left(ai- (ia \modulo V) -V\left[ \frac{ai}{V} \right]\right) \\
&= 2V-i
\end{align*}
holds for  $i=1,\dots, V-1$. This yields
\begin{align*}
\kappa(\lfloor iw\rfloor)+1=-\dedl \frac{ai}{V}\dedr+\frac{1}{2}-\frac{i}{V}\,.
\end{align*}
As $\gcd(V,a)=1$, the set of values of $\dedl \frac{ai}{V}\dedr^2$ for $i=1,\dots, V-1$ agrees with the set of values of 
$\dedl \frac{j}{V}\dedr^2$ for $j=1,\dots, V-1$. Therefore,
\begin{align*}
\sum\limits_{i=1}^{V-1}\dedl \frac{ai}{V}\dedr ^2
&= \sum\limits_{j=1}^{V-1} \dedl\frac{j}{V}\dedr^2 = \saw(1,V) 
= \frac{1}{12V}(V-1)(V-2)\,,
\end{align*}
where we used Definition~\ref{defnsawtooth}, Lemma~\ref{lem12}, and
the fact $\dedl\frac{V}{V}\dedr=0$ for the second equality.
It is well known that $\sum_{i=1}^{V-1} \dedl\frac{ai}{V} \dedr=0$ (as $ \dedl\frac{ai}{V} \dedr+ \dedl\frac{a(V-i)}{V} \dedr=0$ for $i=1,\dots, V-1$ if $\gcd(V,a)=1$, analogously to Remark~\ref{rem1}). Thus,
\begin{align*}
\sum\limits_{i=1}^{V-1}\frac{i}{V} \dedl\frac{ai}{V} \dedr
&=\sum\limits_{i=1}^{V-1}\left(\frac{i}{V}-\left[\frac{i}{V}\right]-\frac{1}{2}\right) \dedl\frac{ai}{V} \dedr
=\sum\limits_{i=1}^{V-1} \dedl\frac{i}{V} \dedr \dedl\frac{ai}{V} \dedr=	\saw(a,V)\,.
\end{align*}
Combining everything, we obtain
\begin{align*}
\sum\limits_{i=1}^{V-1}(\kappa(\lfloor iw\rfloor)+1)^2
&=\sum\limits_{i=1}^{V-1}\left(- \dedl\frac{ai}{V}\dedr +\frac{1}{2}-\frac{i}{V}\right)^2\\
&=\sum\limits_{i=1}^{V-1} \dedl\frac{ai}{V}\dedr^2 -\sum\limits_{i=1}^{V-1}\dedl\frac{ai}{V}\dedr+2\sum\limits_{i=1}^{V-1}\frac{i}{V} \dedl\frac{ai}{V}\dedr \\
&\phantom{===}-\frac{1}{V}\sum\limits_{i=1}^{V-1}i+\frac{1}{4}(V-1)+\frac{1}{V^2}\sum\limits_{i=1}^{V-1}i^2\\
&=\frac{1}{12V}(V-1)(V-2)+2 \saw(a,V)+\frac{1}{12V}(V-1)(V-2)\\
&=\frac{1}{6V}(V-1)(V-2)+2 \saw(a,V)\,.
\end{align*}
\end{proof}

In the following, we will consider the difference ${\LHS}-\flexhat{\LHS}$ for Case I and II separately:

\paragraph{\bf Case I}
Using  Lemma~\ref{lemKappa} and Equation~\eqref{eq:volChange_CaseI}, we get
\begin{align*}
{\LHS}-\flexhat{\LHS}&=\frac{1}{\flexhat{V}+\flexhat{a}}(\flexhat{V}+\flexhat{a}-1)(\flexhat{V}+\flexhat{a}-2)+12 \saw(\flexhat{a},\flexhat{V}+\flexhat{a})\\
&\quad -\frac{1}{\flexhat{V}}(\flexhat{V}-1)(\flexhat{V}-2)-12 \saw(\flexhat{a},\flexhat{V})\,.
\end{align*}
The reciprocity law (Lemma~\ref{lem12}) and the periodicity for Dedekind sums (Remark~\ref{rem1})
imply

\begin{align*}
12\saw(\flexhat{a},\flexhat{V}+\flexhat{a}) 
&=12\left(-\saw(\flexhat{V}+\flexhat{a},\flexhat{a})-\frac{1}{4}+\frac{1}{12}\left(\frac{\flexhat{a}}{\flexhat{V}+\flexhat{a}}+\frac{1}{\flexhat{a}(\flexhat{V}+\flexhat{a})}+\frac{\flexhat{V}+\flexhat{a}}{\flexhat{a}}\right)\right)\\ 
&= -12\saw(\flexhat{V},\flexhat{a})-3
+\frac{\flexhat{a}}{\flexhat{V}+\flexhat{a}}+\frac{1}{\flexhat{a}(\flexhat{V}+\flexhat{a})}+\frac{\flexhat{V}+\flexhat{a}}{\flexhat{a}}\\ 
&= 12\left(\saw(\flexhat{a},\flexhat{V})+\frac{1}{4}-\frac{1}{12}\left(\frac{\flexhat{a}}{\flexhat{V}}+\frac{1}{\flexhat{a}\flexhat{V}}+\frac{\flexhat{V}}{\flexhat{a}}\right)\right)\\ 
&\qquad -3
+\frac{\flexhat{a}}{\flexhat{V}+\flexhat{a}}+\frac{1}{\flexhat{a}(\flexhat{V}+\flexhat{a})}+\frac{\flexhat{V}+\flexhat{a}}{\flexhat{a}})\\
&=12\saw(\flexhat{a},\flexhat{V})+\frac{-\flexhat{a}^2+\flexhat{a}\flexhat{V}+\flexhat{V}^2-1}{\flexhat{V}(\flexhat{V}+\flexhat{a})}\,.
\end{align*}
Simplifying
\begin{align*}
\frac{1}{\flexhat{V}+\flexhat{a}}(\flexhat{V}+\flexhat{a}-1)(\flexhat{V}+\flexhat{a}-2)=\frac{1}{\flexhat{V}}(\flexhat{V}-1)(\flexhat{V}-2)+\frac{\flexhat{a}(\flexhat{a}\flexhat{V}+\flexhat{V}^2-2)}{\flexhat{V}(\flexhat{V}+\flexhat{a})}\,,
\end{align*}
we arrive at
\begin{align}\label{eq:diffLHS2}
{\LHS}-\flexhat{\LHS}
&=\frac{(\flexhat{a}+1)(\flexhat{V}-1)(\flexhat{V}+\flexhat{a}+1)}{\flexhat{V}(\flexhat{V}+\flexhat{a})}=(\flexhat{a}+1)\left(1-\frac{\flexhat{a}+1}{\flexhat{V}(\flexhat{V}+\flexhat{a})}\right)\,.
\end{align}
\bigskip

\paragraph{\bf Case II}
Using Lemma~\ref{lemKappa} and Equation~\eqref{eq:volChange_CaseII}, we similarly obtain
\begin{align*}
{\LHS}-\flexhat{\LHS}&=\frac{1}{2\flexhat{V}-\flexhat{a}}(2\flexhat{V}-\flexhat{a}-1)(2\flexhat{V}-\flexhat{a}-2)+12\saw(\flexhat{V},2\flexhat{V}-\flexhat{a}) \\
&\quad -\frac{1}{\flexhat{V}}(\flexhat{V}-1)(\flexhat{V}-2)-12\saw(\flexhat{a},\flexhat{V})\,.
\end{align*}
Again, the reciprocity law (Lemma~\ref{lem12}) and elementary properties of Dedekind sums (Remark~\ref{rem1}) imply
\begin{align*}
12\saw(\flexhat{V},2\flexhat{V}-\flexhat{a})
&=-12 \saw(2\flexhat{V}-\flexhat{a},\flexhat{V})-3 +
\frac{\flexhat{a}}{2\flexhat{V}-\flexhat{a}}+\frac{1}{\flexhat{a}(2\flexhat{V}-\flexhat{a})}+\frac{2\flexhat{V}-\flexhat{a}}{\flexhat{a}}\\
&=12\,\saw(\flexhat{a},\flexhat{V})-3+\frac{\flexhat{V}^2+(2\flexhat{V}-\flexhat{a})^2+1}{\flexhat{V}(2\flexhat{V}-\flexhat{a})}\,.
\end{align*}
As
\begin{align*}
\frac{1}{2\flexhat{V}-\flexhat{a}}(2\flexhat{V}-\flexhat{a}-1)(2\flexhat{V}-\flexhat{a}-2) =\frac{1}{\flexhat{V}}(\flexhat{V}-1)(\flexhat{V}-2)+\frac{(\flexhat{V}-\flexhat{a})(-\flexhat{a}\flexhat{V}+2\flexhat{V}^2-2)}{\flexhat{V}(2\flexhat{V}-\flexhat{a})}\,,
\end{align*}
we finally obtain
\begin{align}\label{eq:diffLHS1}
{\LHS}-\flexhat{\LHS}
&=\frac{(\flexhat{V}+1)(\flexhat{V}-\flexhat{a}-1)(2\flexhat{V}-\flexhat{a}-1)}{\flexhat{V}(2\flexhat{V}-\flexhat{a})} \notag\\
&=(\flexhat{V}-\flexhat{a}-1)\left(1+\frac{\flexhat{V}-\flexhat{a}-1}{\flexhat{V}(2\flexhat{V}-\flexhat{a})}\right)\,.
\end{align}

\begin{figure}
\centering
\begin{subfigure}[h]{0.4\textwidth}
\centering
\includegraphics[]{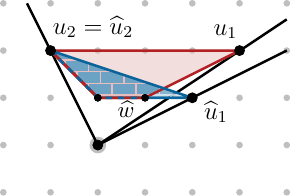}
\caption{}
\label{fig:rhscaseisailnew5a}
\end{subfigure}
\hspace{1cm}
\begin{subfigure}[h]{0.4\textwidth}
\centering
\includegraphics[]{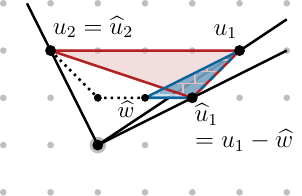}
\caption{}
\label{fig:rhscaseisailnew5b}
\end{subfigure}
\caption{{\bf Case I -- Illustration of considered domains in decomposition of 
$\boldsymbol{\nab \setminus} \text{sail}_{\boldsymbol{\sigma}}$.}
Here: $V=8, a=3$ and $\protect\flexhat{V}=5, \protect\flexhat{a}=3$. 
(A) $\nabhat \setminus \sailhat$ (blue) and $\nab \setminus \sail$ (red).
(B)  $\conv(\flexhat{\raygen}_1,\flexhat{\raygen}_1 + \flexhat{w},\flexhat{w})$ (blue) and
$\conv(\flexhat{\raygen}_1,
\flexhat{\raygen}_1+\flexhat{w},\flexhat{\raygen}_2)$ (red).}
\label{fig:rhscaseisailnew}
\end{figure} 

\subsection{Right hand side of Equation~(\ref{eq:sail})}\label{subsec_RHS}

The two summands 
\begin{align*}
\nvol\left(\nab \setminus \sail \right) \,\, \text{ and } \,\,
\sum\limits_{i=1}^{\ksig-1} \nvol(\convm{i})
\end{align*}
on the right hand side of Equation~\eqref{eq:sail} will be considered separately for each case. 

\paragraph{\bf Case I}
By our construction, we have
\begin{align*}
\nab \setminus\sail= \overline{\left(\left(\nabhat\setminus\sailhat\right)\cup 
\conv(\flexhat{\raygen}_1,\flexhat{\raygen}_1+\flexhat{w},\flexhat{\raygen}_2) \right) \setminus \conv(\flexhat{\raygen}_1,\flexhat{\raygen}_1+\flexhat{w},\flexhat{w})}
\end{align*}
as illustrated in Figure~\ref{fig:rhscaseisailnew}.
Since
\begin{align*}
\conv(\flexhat{\raygen}_1,\flexhat{\raygen}_1+\flexhat{w},\flexhat{w})\subseteq\left(\nabhat\setminus\sailhat\right)\cup\conv(\flexhat{\raygen}_1,\flexhat{\raygen}_1+\flexhat{w},\flexhat{\raygen}_2)
\end{align*}
and
\begin{align*}
\nvol\left((\nabhat\setminus\sailhat)\cap\conv(\flexhat{\raygen}_1,\flexhat{\raygen}_1+\flexhat{w},\flexhat{\raygen}_2)\right)=0\,,
\end{align*}
we obtain
\begin{align*}
&\nvol\left(\nab\setminus\sail\right) -\nvol\left(\nabhat\setminus\sailhat\right) \\
&\qquad\qquad =\nvol\left(\conv(\raygenhat_1,\raygenhat_1+\flexhat{w},\raygenhat_2)\right)-\nvol\left(\conv(\raygenhat_1,\raygenhat_1+\flexhat{w},\flexhat{w})\right)\\ 
&\qquad\qquad =\det(\flexhat{w},\raygenhat_2-\raygenhat_1)-\det(\flexhat{w},\flexhat{w}-\raygenhat_1)
=\det(\flexhat{w},\raygenhat_2-\flexhat{w})
=\det(\flexhat{w},\raygenhat_2)\\ \displaybreak
&\qquad\qquad =\frac{\flexhat{a}}{\flexhat{V}}\cdot \flexhat{V}
=\flexhat{a}
\end{align*}
by utilizing $\det(\flexhat{\raygen}_1,\flexhat{\raygen}_2)=\flexhat{V}$.

For the second summand of the right hand side of Equation~\eqref{eq:sail}, we need to determine how the involved functionals   behave when we change from $\flexhat{\cone}$ to ${\cone}$.
Recall $\mstarhat=-\raygenhat_1^*-\raygenhat_2^*$. Since ${\raygen}_1=\raygenhat_1+\ \flexhat{w}$ and ${\raygen}_2=\raygenhat_2$, we have 
\begin{align*}
\mstar=-\frac{\flexhat{V}-1}{\flexhat{V}+\flexhat{a}}\raygenhat_1^*-\raygenhat_2^*=\mstarhat+\frac{\flexhat{a}+1}{\flexhat{V}+\flexhat{a}}\raygenhat_1^*\,.
\end{align*}

\begin{figure}
\centering
\includegraphics[]{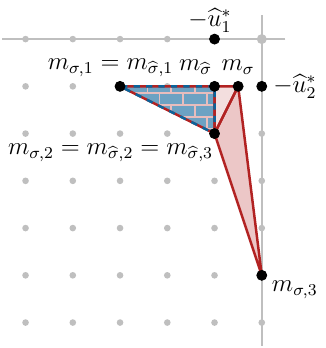}
\caption{{\bf Case I -- dual perspective} for illustration in Figure~\ref{fig:rhscaseisailnew} with 
$\cup_{i=1}^{\ksighat-1} \convmhat{i}$ (blue) and  
$ \cup_{i=1}^{\ksig-1}\convm{i}$ (red).}
\label{fig:rhscaseidual}
\end{figure}

As explained in Section~\ref{secIntroProof}, the functionals $\mstarhatk{1},\mstarhatk{2},\dots,\mstarhatk{\ksig}$ may be associated with edges of $\overline{\text{sail}}_{\flexhat{\cone}}$ not incident to $0$ (and similarly for $\sailclosed$). We enumerate these edges of the sail starting from the edge incident to $\raygenhat_2$ 
(and finishing with the edge incident to $\raygenhat_1$).
By construction, $\sailclosed$ has the same edges as $\overline{\text{sail}}_{\flexhat{\cone}}$, except for the last edge connecting $\flexhat{w}$ to $\raygenhat_1$ and $\flexhat{w}$ to $\raygenhat_1+\ \flexhat{w}$, respectively 
(cf. Figure~\ref{fig:rhscaseisailnew}\subref{fig:rhscaseisailnew5a}). Accordingly, this is also true for the functionals, except that $\mstarhatk{\ksighat}$ is replaced by $\mstark{\ksig}=-\flexhat{V}\raygenhat_2^*$ (cf. Figure~\ref{fig:rhscaseidual}).
As 
\begin{align*}
\langle \mstarhatk{1},\raygenhat_2\rangle=-1 \,\, \text{ and } \,\,
\langle \mstarhatk{1},\dfrac{1}{\flexhat{V}}\raygenhat_1+ \dfrac{\flexhat{b}}{\flexhat{V}}\raygenhat_2\rangle=-1\,,
\end{align*}
we have
\begin{align}\label{eq:m1}
\mstarhatk{1}=(\flexhat{b}-\flexhat{V})\raygenhat_1^*-\raygenhat_2^*\,,
\end{align}
where $\flexhat{b}\in[1,V]$ is the multiplicative inverse of $\flexhat{a}$ modulo $\flexhat{V}$, \emph{i.e.}, $\flexhat{a}\cdot \flexhat{b}=1$ mod $\flexhat{V}$.
Furthermore,
\begin{align}\label{eq:mk}
\mstarhatk{\ksighat}=-\raygenhat_1^*+(\flexhat{a}-\flexhat{V})\raygenhat_2^*
\end{align}
because  $\langle\mstarhatk{\ksighat},\raygenhat_1\rangle=-1$ and $\langle \mstarhatk{\ksighat},\flexhat{w}\rangle=\langle \mstarhatk{\ksighat}, \dfrac{\flexhat{a}}{\flexhat{V}}\raygenhat_1+\dfrac{1}{\flexhat{V}}\raygenhat_2\rangle=-1$.
Additionally, $\mstarhat$, $\mstar$ and $\mstarhatk{1}$ are collinear (as they all take value $-1$ on $\raygenhat_2$), see Figure~\ref{fig:rhscaseidual}. 
Therefore, with $\ksighat=\ksig$, we have
\begin{align*}
\bigcup\limits_{i=1}^{\ksighat-1} \convmhat{i}\subseteq 
 \bigcup\limits_{i=1}^{\ksig-1}\convm{i}\,.
 \end{align*}
Hence, we get (cf. Figure~\ref{fig:rhscaseidual})
\begin{align*}
&\sum\limits_{i=1}^{\ksig-1} \nvol(\convm{i})-
\sum\limits_{i=1}^{\ksighat-1} \nvol(\convmhat{i})\\ 
&= \nvol({\conv(\mstar,\mstarhat,\mstarhatk{\ksighat})}) + \nvol({\conv(\mstar,\mstarhatk{\ksighat},\mstark{
\ksig})})\\ 
&=\det(\mstarhatk{\ksighat}-\mstarhat,\mstar-\mstarhat) +\det(\mstark{\ksig}-\mstarhatk{\ksighat},\mstar-\mstarhatk{\ksighat})\\ 
&=\det\left((\flexhat{a}+1-\flexhat{V})\raygenhat_2^*, \frac{\flexhat{a}+1}{\flexhat{V}+\flexhat{a}}\raygenhat_1^*\right) +\det\left(\raygenhat_1^*-\flexhat{a}\raygenhat_2^*, (\flexhat{V}-\flexhat{a}-1)\raygenhat_2^*+\frac{\flexhat{a}+1}{\flexhat{V}+\flexhat{a}}\raygenhat_1^*\right)\\
&=-\frac{(\flexhat{V}-\flexhat{a}-1)(\flexhat{a}+1)}{\flexhat{V}+\flexhat{a}}\det(\raygenhat_2^*,\raygenhat_1^*) -\frac{\flexhat{a}(\flexhat{a}+1)}{\flexhat{V}+\flexhat{a}}\det(\raygenhat_2^*,\raygenhat_1^*) +(\flexhat{V}-\flexhat{a}-1)\det(\raygenhat_1^*,\raygenhat_2^*)\\
&=\frac{(\flexhat{V}-\flexhat{a}-1)(\flexhat{a}+1)}{\flexhat{V}(\flexhat{V}+\flexhat{a})}+\frac{\flexhat{a}(\flexhat{a}+1)}{\flexhat{V}(\flexhat{V}+\flexhat{a})} +\frac{\flexhat{V}-\flexhat{a}-1}{\flexhat{V}}
=1-\frac{(\flexhat{a}+1)^2}{\flexhat{V}(\flexhat{V}+\flexhat{a})}\,.
\end{align*}
Combining everything yields
\begin{equation*} \label{rhs-rhscase1}
{\RHS}-\flexhat{\RHS}=\flexhat{a}+1-\frac{(\flexhat{a}+1)^2}{\flexhat{V}(\flexhat{V}+\flexhat{a})},
\end{equation*}
which coincides  with the corresponding difference ${\LHS}-\flexhat{\LHS}$ in Equation~\eqref{eq:diffLHS2}.

\begin{figure}[b]
\centering
\includegraphics[]{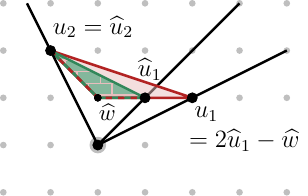}
\caption{{\bf Case II -- Illustration of considered domains in decomposition of 
$\boldsymbol{\nab \setminus} \text{sail}_{\boldsymbol{\sigma}}$.}
Here: $V=5, a=3$ and $\protect\flexhat{V}=3, \protect\flexhat{a}=1$. 
$\nabhat \setminus \sailhat$ (green) and $\nab \setminus \sail$ (red).}
\label{fig:rhscaseiisail}
\end{figure}

\bigskip
\paragraph{\bf Case II}
By our assumption (as illustrated in Figure~\ref{fig:rhscaseiisail}), we have
\begin{align*}
\nab\setminus\sail= (\nabhat\setminus\sailhat)\cup\conv(\raygenhat_1,\raygenhat_2,2\raygenhat_1-\flexhat{w})\,.
\end{align*}
Using again $\det(\raygenhat_1,\raygenhat_2)=\flexhat{V}$, we obtain
\begin{align*}
&v\left({\nab}\setminus\sail\right)-v\left(\nabhat\setminus\sailhat\right) 
=v\left(\conv(\raygenhat_1,\raygenhat_2,2\raygenhat_1-\flexhat{w})\right)
=\det(\raygenhat_1-\flexhat{w},\raygenhat_2-\raygenhat_1)\\
&=\det\left(\left(1-\frac{\flexhat{a}}{\flexhat{V}}\right)\raygenhat_1-\frac{1}{\flexhat{V}}\raygenhat_2,\raygenhat_2-\raygenhat_1\right)
=\left(1-\frac{\flexhat{a}}{\flexhat{V}}\right)\cdot \flexhat{V}-\frac{1}{\flexhat{V}}\cdot \flexhat{V}
=\flexhat{V}-\flexhat{a}-1\,.
\end{align*}

As in Case~I, we need to determine how the functionals involved in the second summand of the RHS of Equation~\eqref{eq:sail} behave when we change from $\flexhat{\cone}$ to ${\cone}$.
First recall that $\mstarhat=-\raygenhat_1^*-\raygenhat_2^*$. As ${\raygen}_1=2\raygenhat_1-\flexhat{w}$ and ${\raygen}_2=\raygenhat_2$, we have
\begin{align*}
\mstar=-\frac{\flexhat{V}+1}{2\flexhat{V}-\flexhat{a}}\raygenhat_1^*-\raygenhat_2^*
=\mstarhat+\frac{\flexhat{V}-\flexhat{a}-1}{2\flexhat{V}-\flexhat{a}}\raygenhat_1^*\,.
\end{align*}

\begin{figure}[b]
\centering
\includegraphics{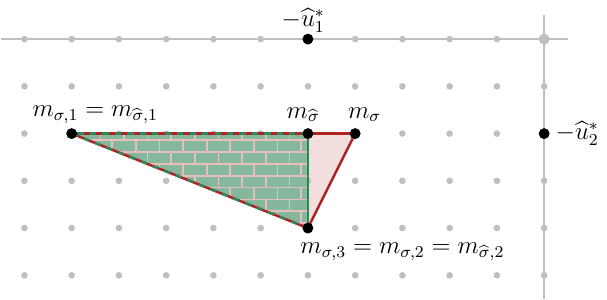}
 \caption{{\bf Case II -- dual perspective} for illustration in Figure~\ref{fig:rhscaseiisail} with 
$\cup_{i=1}^{\ksighat-1} \convmhat{i}$ (green) and  
$ \cup_{i=1}^{\ksig-1}\convm{i}$ (red).}
\label{fig:rhscaseiidual}
\end{figure}

As above, we enumerate the edges of the sail that are not incident to $0$ starting from the edge incident to $\raygenhat_2$ (finishing with the edge incident to $\raygenhat_1$).
By construction, $\sailclosed$ has the same edges as $\overline{\text{sail}}_{\flexhat{\cone}}$ and an additional edge with functional $\mstark{\ksighat+1}=\mstarhatk{\ksighat}$. The other functionals are identical, see Figure~\ref{fig:rhscaseiidual} for an illustration.
As in Case I, the functionals $\mstarhatk{1}$ and $\mstarhatk{\ksighat}$ can be expressed by Equation~\eqref{eq:m1} and \eqref{eq:mk}.
Hence,
\begin{align*}
&\sum\limits_{i=1}^{\ksig-1} \nvol(\convm{i}) -\sum\limits_{i=1}^{\ksighat -1} \nvol(\convmhat{i})\\
&=\sum\limits_{i=1}^{\ksighat-1} \nvol(\conv(\mstar,\mstarhatk{i},\mstarhatk{i+1})) -\sum\limits_{i=1}^{\ksighat-1} \nvol(\conv(\mstarhat,\mstarhatk{i},\mstarhatk{i+1}))\\
&= \nvol(\conv(\mstar,\mstarhat,\mstarhatk{\ksighat}))
=\det(\mstar-\mstarhat,\mstarhatk{\ksighat}-\mstarhat)\\
&=\det\left(\frac{\flexhat{V}-\flexhat{a}-1}{2\flexhat{V}-\flexhat{a}}\raygenhat_1^*,(\flexhat{V}-\flexhat{a}-1)\raygenhat_2^*\right)
=\frac{(\flexhat{V}-\flexhat{a}-1)^2}{2\flexhat{V}-\flexhat{a}}\det(\raygenhat_1^*,\raygenhat_2^*)
=\frac{(\flexhat{V}-\flexhat{a}-1)^2}{\flexhat{V}(2\flexhat{V}-\flexhat{a})}\,.
\end{align*}
Combining everything yields
\begin{align*} 
{\RHS}-\flexhat{\RHS}=\flexhat{V}-\flexhat{a}-1+\frac{(\flexhat{V}-\flexhat{a}-1)^2}{\flexhat{V}(2\flexhat{V}-\flexhat{a})}\,,
\end{align*}
which coincides  with the corresponding difference ${\LHS}-\flexhat{\LHS}$ in Equation~\eqref{eq:diffLHS1}.

\section*{Acknowledgements}
The first author is partially supported by the Deutsche Forschungsgemeinschaft (DFG -- German Research Foundation) -- Project-ID 195170736 -- TRR109 “Discretization in Geometry and Dynamics”.
The second and third authors have been partially
supported by the Polish-German grant 
"ATAG -- Algebraic Torus Actions: Geometry and Combinatorics" 
[project number 380241778] of the Deutsche
Forschungsgemeinschaft (DFG). 

\bibliographystyle{amsalpha}

\end{document}